\newcommand{\bS}{\mathbb S}
\newcommand{\Hol}{\operatorname{Hol}}
\newcommand{\CC}{\mathbb C}
\newcommand{\NN}{\mathbb N}
\newcommand{\RR}{\mathbb R}
\newcommand{\ZZ}{\mathbb Z}
\newcommand{\del}{\partial}
\newcommand{\Ric}{\mathrm{Ric}}
\newcommand{\Diff}{\mathrm{Diff}}
\newcommand{\calC}{{\mathcal C}}
\newcommand{\calD}{{\mathcal D}}
\newcommand{\calO}{{\mathcal O}}
\newcommand{\calV}{{\mathcal V}}
\newcommand{\frakf}{\mathfrak f}
\newcommand{\frakr}{\mathfrak r}
\newcommand{\Fr}{\mathrm{Fr}}
\newcommand{\Spec}{\mathrm{Spec}}
\newcommand{\Vol}{\mathrm{Vol}}
\newtheorem{theorem}{Theorem}
\newtheorem{theoremIntro}{Theorem}
\newtheorem*{theorem*}{Theorem}
\newtheorem{proposition}{Proposition}
\newtheorem{corollary}{Corollary}
\newtheorem{lemma}{Lemma}
\theoremstyle{definition}
\newtheorem{definition}{Definition}
\theoremstyle{remark}
\newtheorem*{remark}{Remark}
\newtheorem{ex}{Example}
\title{Spectral properties of reducible conical metrics}
\author{Bin Xu
\and Xuwen Zhu}
\newcommand{\Addresses}{{
  \bigskip
  \footnotesize

 \noindent Bin Xu\\ \textsc{CAS Wu Wen-Tsun Key Laboratory of Mathematics and School of Mathematical Sciences, University of Science and Technology of China, Hefei 230026, Anhui, China}\par\nopagebreak
  \textit{E-mail address}: \texttt{bxu@ustc.edu.cn}

  \medskip

 \noindent Xuwen Zhu\\ \textsc{Department of Mathematics, Northeastern University, Boston 02115, Massachusetts, USA}\par\nopagebreak
  \textit{E-mail address}: \texttt{x.zhu@northeastern.edu}
}}
\date{}
\begin{document}
\maketitle

\begin{abstract}
We show that the monodromy of a spherical conical metric $g$ is reducible if and only if the metric $g$ has a real-valued eigenfunction with eigenvalue~2 for the holomorphic extension $\Delta_g^{\rm Hol}$ of the associated Laplace--Beltrami operator. Such an eigenfunction produces a meromorphic vector field, which is then related to the developing maps of the conical metric. We also give a lower bound of the first nonzero eigenvalue of
$\Delta_g^{\rm Hol}$,
together with a complete classification of the dimension of the space of real-valued $2$-eigenfunctions for $\Delta_g^{\rm Hol}$ depending on the monodromy of the metric $g$. This paper can be seen as a new connection between  the complex analysis method and the PDE approach in the study of spherical conical metrics.
\end{abstract}


\section{Introduction}

The study of the interplay between the geometry and the spectrum of geometrically related operators has a long history and has produced a lot of interesting results. In this paper, we study how the monodromy of a spherical conical metric influences the spectrum of the associated Laplace--Beltrami operator. Our results connect two areas of research which attracted considerable attention in recent years. One is the study of metrics of constant positive curvature with conical singularities on compact Riemann surfaces (which we call ``spherical conical metrics''), and another is the spectral theory of the Laplace--Beltrami operators on singular surfaces. For the theory of spherical conical metrics, reducible monodromy is expected to occur at the singular points of the moduli space of such metrics. For the theory of Laplace operators, metrics with conical singularities give an interesting class of examples and exhibit many surprising phenomena. This paper appears to be the first to establish a connection between those two areas.

We start by introducing the basic setup.
Let $\Sigma$ be a compact Riemann surface, ${\frak p}=(P_1,\cdots, P_n)$ be an $n$-tuple of distinct points on $\Sigma$, and $\vec \beta=(\beta_1,\cdots,\beta_n)\in (\RR_{+}\setminus \{1\})^{n}$ be an $n$-dimensional vector.  We say $g$ is a {\em conical metric} representing the divisor $D=\sum_{j=1}^n\,(\beta_j-1)[P_j]$ on $\Sigma$, if
$g$ is a smooth conformal metric on the punctured surface $\Sigma\backslash {\rm supp}\, D= \Sigma\backslash \{P_1,\cdots, P_n\}$ and has conical singularities of angle $2\pi\beta_j$ at $P_j$ for $j=1, \dots, n$. The latter condition means that near $P_j$ there is a complex coordinate $z$ such that $z(P_{j})=0$ and $g$ can be written as $g=e^{2u}|dz|^2$ where $u-(\beta_j-1)\log\,|z|$ extends to $z=0$ continuously.
We say a conical metric $g$ representing $D$ is {\em spherical} if
$g$ has constant curvature one on the punctured surface $\Sigma\backslash {\rm supp}\, D$.

A classical way to view spherical conical metrics is through their developing maps. For any such metric $g$, there exists a multi-valued locally univalent meromorphic function
$f: \Sigma\backslash {\rm supp}\, D \to \overline{\Bbb C}$,
called a {\it developing map} of $g$, such that $g$ is given by the pullback by $f$ of the standard spherical metric. Such a developing map has the following three properties (cf. \cite[Lemma 2.1 and Lemma 3.1]{CWWX2015}):
\begin{enumerate}
\item (Pull-back) Denote the standard metric on the sphere by $g_{\rm st}=\frac{4|dw|^2}{(1+|w|^2)^2}$ for $w\in \overline{\CC}$, then $g=f^{*}g_{\rm st}$  on $\Sigma\backslash {\rm supp}\, D$;
\item (Monodromy) The monodromy of $f$ is contained in
$${\rm PSU}(2)=\left\{w\mapsto\frac{aw+b}{-\overline{b}w+\overline{a}}:\ a,\ b\in \mathbb{C},\ \vert a \vert^2+\vert b \vert^2=1\right\};$$
\item (Cone angle) Near each $P_j$, the principal singular term of the Schwarzian derivative of $f$ is given by $\frac{1-\beta_j^2}{2z^2}$.
\end{enumerate}
We note here that for a given spherical conical metric, its developing map is not unique, and all such maps are related by M\"obius transforms in ${\rm PSU}(2)$. So for a given metric, the monodromy of all its developing maps are contained in the same conjugacy class of ${\rm PSU}(2)$.
In this paper we are in particular interested in the following class of metrics:
\begin{definition}
A spherical conical metric $g$ is called {\it reducible} if there exists for the metric $g$ a developing map with monodromy in  ${\rm U}(1)=\{w\mapsto e^{i\theta}w:\theta\in [0,\, 2\pi)\}$. Such a developing map of $g$ is called {\it multiplicative}. The metric $g$ is called {\it trivially reducible} if the monodromy of its developing map is trivial.
\end{definition}

One can also view a spherical conical metric as a solution to the following singular Liouville equation:
\begin{equation}\label{e:liou}
\Delta_{g_{0}}u-e^{2u}+K_{g_{0}}=0
\end{equation}
where $g_{0}$ is a conical metric with the prescribed conical singularities but not necessarily with constant curvature one, and $g=e^{2u}g_{0}$ gives the sought-after  spherical conical metric in the same conformal class. Here $\Delta_{g_{0}}$ is the associated Laplace--Beltrami operator of $g_{0}$. When some of the cone angles are bigger than $2\pi$, the existence and uniqueness of solutions to~\eqref{e:liou} is still not completely understood. One approach is via perturbation near a given spherical conical metric $g$, which leads one to study the linearized operator of the above equation, given by
\[
\Delta_{g}-2.
\]
Unlike for
complete metrics, in order for $\Delta_{g}$ of a conical metric to be self-adjoint, boundary conditions are needed. One common choice is the Friedrichs extension $\Delta_{g}^{\Fr}$, which is the extension such that the domain only consists of bounded functions on $\Sigma$. This is also the extension one uses to solve the perturbation problem of~\eqref{e:liou}.
It is known that when all cone angles are less than $2\pi$, the first nonzero eigenvalue $\lambda_{1}$ of $\Delta_{g}^{\Fr}$ is bounded below by $2$, and $\lambda_{1}=2$ if and only if $g$ is a spherical football~\cite{LuTi92, MaWe}. However, when some of the cone angles are bigger than $2\pi$, $2$ is no longer the lower bound, and the deformation is obstructed exactly when $2$ is in the spectrum of $\Delta_{g}^{\Fr}$. In~\cite{MaZh19} it is shown that the deformation can be unobstructed by ``splitting'' cone points, and there is a trichotomy of deformation rigidity depending on the dimension of eigenspace with eigenvalue $2$.
In addition to the Friedrichs extension, in this paper we also consider another extension called the holomorphic extension $\Delta_{g}^{\Hol}$, which was introduced in~\cite{Hill} in the case of flat conical metrics. We show that the functions in the domain of this extension are also closely related to the spectral geometry of reducible conical metrics.

Now we state the main result of this paper, which is a spectral characterization of spherical conical metrics with reducible monodromy. Denote by $\Delta_{g}$ the Laplace--Beltrami operator of a spherical conical metric $g$, and $\calD^{\Hol}$ (resp. $\calD^{\Fr}$) the domain of the holomorphic (resp. Friedrichs) extension of $\Delta_{g}$. The main theorem is stated below (for a more precise statement see Theorem~\ref{t:eigen} and Theorem~\ref{t:reducible}):
\begin{theoremIntro}
A spherical conical metric $g$ has reducible monodromy if and only if there is a real-valued eigenfunction $\phi\in \calD^{\Hol}$ satisfying
$$
\Delta_{g} \phi=2\phi.
$$
\end{theoremIntro}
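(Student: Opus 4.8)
The bridge between the two sides of the equivalence is the developing map $f$ together with the classical description of the eigenvalue-$2$ eigenspace of the round sphere. On $\overline{\CC}$ with the metric $g_{\mathrm{st}}$, the solutions of $\Delta_{g_{\mathrm{st}}}h=2h$ form a three-dimensional space spanned by the restrictions of the linear coordinate functions of $\RR^3$; writing $w=[w_1:w_2]$ in homogeneous coordinates $\mathbf w=(w_1,w_2)$, each such $h$ equals $h_A=\langle A\mathbf w,\mathbf w\rangle/|\mathbf w|^2$ for a unique Hermitian traceless $2\times 2$ matrix $A$, and the substitution of a M\"obius transform $M\in\mathrm{PSU}(2)$ corresponds to $h_A\circ M=h_{M^{*}AM}$, i.e. to the standard action of $\mathrm{SO}(3)$ on $\RR^3$. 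Under this dictionary a subgroup of $\mathrm{PSU}(2)$ fixes a nonzero $A$ precisely when it is conjugate into $\mathrm U(1)$, so reducibility of $g$ is equivalent to the existence of a monodromy-invariant matrix $A\neq 0$. The plan is to realize each invariant $A$ as a single-valued eigenfunction $h_A\circ f$, and conversely to read off an invariant $A$ from any real $2$-eigenfunction.

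For the forward implication I would fix a developing map $f$ whose monodromy lies in $\mathrm U(1)=\{w\mapsto e^{i\theta}w\}$ and take the $\mathrm U(1)$-invariant height function $h(w)=(1-|w|^2)/(1+|w|^2)$. Its pullback $\phi=h\circ f$ is then single-valued on $\Si\setminus\supp D$ and real-valued, and since $g=f^{*}g_{\mathrm{st}}$ makes $f$ a local isometry one has $\Delta_g\phi=(\Delta_{g_{\mathrm{st}}}h)\circ f=2\phi$ away from the cone points. What remains is to verify $\phi\in\calD^{\Hol}$: near each $P_j$ the invariance of the local monodromy forces the normal form $f\sim c\,z^{\pm\beta_j}$, so that $\phi=(1-|c|^2|z|^{\pm 2\beta_j})/(1+|c|^2|z|^{\pm 2\beta_j})$, and I would match this local expansion against the defining asymptotics of the holomorphic extension.

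For the converse I would attach to a real $2$-eigenfunction its natural holomorphic datum. Writing $g=e^{2u}|dz|^2$, a direct computation from $\Delta_g\phi=2\phi$ and $K_g=1$ shows that the traceless Hessian $(\phi_{zz}-2u_z\phi_z)\,dz^2$ vanishes (equivalently, Obata's equation $\nabla^2\phi=-\phi\,g$ holds locally, as it must on a surface locally isometric to the sphere), so the $(1,0)$-part of the gradient $V=e^{-2u}\phi_{\bar z}\,\del_z$ is a \emph{holomorphic} vector field on $\Si\setminus\supp D$. The role of $\calD^{\Hol}$ is precisely to guarantee that $V$ extends to a \emph{meromorphic} vector field on $\Si$, with poles at the $P_j$ controlled by $\beta_j$. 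On any simply connected patch, elliptic regularity and unique continuation on $\overline{\CC}$ give $\phi=h_A\circ f$ for a unique Hermitian traceless $A$, whence $V=f^{*}V_A$ with $V_A=(\nabla h_A)^{1,0}\in\mathfrak{sl}(2,\CC)$; continuing around a loop $\gamma$ replaces the branch $f$ by $M_\gamma\circ f$, and since the globally defined $\phi$ and $V$ are unchanged this forces $M_\gamma^{*}AM_\gamma=A$ for every $\gamma$. As $\phi\not\equiv 0$ gives $A\neq 0$ and the monodromy lies in the real group $\mathrm{SO}(3)$, the fixed vector can be taken real, producing a common rotation axis and hence reducibility.

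The main obstacle I anticipate is analytic rather than algebraic: making the passage between the PDE domain $\calD^{\Hol}$ and the complex-analytic side precise at the cone points. Concretely, I expect the delicate steps to be (i) checking that the bounded pullback $h\circ f$ actually satisfies the boundary condition defining $\calD^{\Hol}$, and (ii) showing conversely that membership in $\calD^{\Hol}$ upgrades the holomorphy of $V$ on the punctured surface to genuine meromorphy across each $P_j$ with the correct pole order. Both require the local normal form of $f$ read off from the Schwarzian data, and the genuinely subtle case is that of integer cone angles $\beta_j\in\ZZ$, where the local monodromy is trivial, possible logarithmic terms and apparent singularities appear, and the choice among self-adjoint extensions is most sensitive.
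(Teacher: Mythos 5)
Your converse breaks down at its very first step, and the gap is the heart of the matter. You claim that $\Delta_g\phi=2\phi$ together with $K_g\equiv 1$ forces, ``by a direct computation,'' the vanishing of the traceless Hessian, i.e.\ that Obata's equation $\nabla^2\phi=-\phi\,g$ holds \emph{locally}. It does not: the eigenvalue equation is only the trace part of Obata's equation. What the local computation actually yields when $\lambda=2$, $K=1$ is $\partial_{\bar z}\bigl(\phi_{zz}-2u_z\phi_z\bigr)=\bigl(\tfrac{K}{2}-\tfrac{\lambda}{4}\bigr)e^{2u}\phi_z=0$, i.e.\ the traceless Hessian is a \emph{holomorphic quadratic differential} on $\Si\setminus\supp D$, not zero. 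A concrete counterexample to your local claim: near a point of the round sphere, $\phi=f_2(\frakr)\cos(2\theta)$, with $f_2\sim\frakr^2$ the regular solution of the $k=2$ radial ODE, is a smooth local $2$-eigenfunction, while every local Obata solution is the restriction of a linear function and has only Fourier modes $|k|\le 1$; so this $\phi$ has nonvanishing traceless Hessian. Consequently the holomorphy of $V=e^{-2u}\phi_{\bar z}\partial_z$ on the punctured surface is a \emph{global} fact, and establishing it is precisely the technical core of the paper: the Bochner identity $\nabla^*\nabla^{(0,1)}X=\tfrac12(\lambda-2)X$ combined with the integration by parts $\int_\Si|\nabla^{(0,1)}X|^2=\tfrac12(\lambda-2)\int_\Si|X|^2$, whose justification (the Fourier-mode decay analysis at each cone point in Section~\ref{ss:merovec}) is exactly where the hypothesis $\phi\in\calD^{\Hol}$ enters. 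You invoke $\calD^{\Hol}$ only to extend $V$ meromorphically across the $P_j$, but it is needed \emph{before} that, to make $V$ holomorphic away from them. The gap is fatal rather than cosmetic: your monodromy argument uses only loops in the punctured surface, so if the local Obata claim were true your proof would apply verbatim to real $2$-eigenfunctions of the Friedrichs extension (indeed to any distributional $2$-eigenfunction, with no boundary condition at all), contradicting the paper's discussion in Section~\ref{s:discussion} that irreducible metrics with $2$ in the Friedrichs spectrum are expected to exist.

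For the rest: your forward direction is the paper's own proof (Section~\ref{s:eigen}), with the same function $(1-|f|^2)/(1+|f|^2)$ and the same verification via local normal forms; the integer-angle case you flag is handled in the paper by the expansion of $(az^n+b)/(cz^n+d)$, where membership in $\calD^{\Hol}$ is automatic because all nonconstant terms are $\calO(|z|^n)=\calO(|z|^{J+1})$. Your algebraic endgame, \emph{granted} the holomorphy of $V$, is a legitimate and genuinely different route: you identify $\phi=h_A\circ f$ locally for a traceless Hermitian $A$ fixed by the monodromy, whose stabilizer is a maximal torus, hence reducibility; the paper instead reconstructs a multiplicative developing map from the eigenfunction via the character one-form $\omega=-2C\,dz/F$ and classifies its zeros and poles (Section~\ref{ss:oneform}). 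Your route is shorter where it works; the paper's reconstruction has the advantage of producing the explicit relation $\phi=C(1-|f|^2)/(1+|f|^2)$ and the character one-form, which are then used to prove Theorem~\ref{thm:hol_eigenspace} on the dimension of the eigenspace.
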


There has been a lot of recent development in understanding spherical conical metrics. One of the features of this problem is that it can be approached from many aspects of mathematics including complex analysis, min-max theory, integrable systems, synthetic geometry, etc., see~\cite{McOw, Tro, LuTi92, Ere, UY,  BDM, Car, CLW, LW, CKL, MP, MP2, Ere2017, Ere19, EG, Ka, Dey, CWWX2015, SCLX, MaWe, MaZh17, MaZh19, Zh18} and the references therein.   This paper can be seen as a new connection between  the complex analysis method and the PDE approach.

The study of reducible conical metrics was initiated in~\cite{UY}, and has seen a lot of development recently~\cite{CWWX2015, SongXu2020, Ere2017}. One feature of reducible metrics is that there exist multiplicative developing maps of such a metric which give meromorphic differentials (sometimes called ``character one-forms'') that are dual to meromorphic vector fields (\cite{CWWX2015}, also see Section~\ref{ss:oneform}). One expects that there will be constraints on the divisor $D=\sum_{j=1}^n\,(\beta_j-1)[P_j]$ for reducible metrics. 
When $\Sigma$ is the Riemann sphere, Song and the first author~\cite{SongXu2020} determined
the angle constraints when all the angles are in $2\pi{\Bbb Q}$. Recently Eremenko~\cite{Ere2017} gave a complete answer on the angle constraint problem on the Riemann sphere.
There is ongoing work of the first author and his collaborators~\cite{CLSX} on the case when the genus of $\Sigma$ is positive. In~\cite{Zh19} the local rigidity of one family of such metrics was shown by using synthetic geometry which exemplifies the constraints on ${\rm supp}\, D$.

The number 2 also appears as the upper bound for the first nonzero eigenvalue in the eigenvalue isoperimetric problem among all smooth metrics on $\bS^{2}$, where the standard spherical metric is the only extremal metric for $\lambda_{1}$~\cite{Her, KNPP}.
There are also two analogues in K\"ahler geometry. One is that a positive lower bound on Ricci curvature gives a lower bound on the first nonzero eigenvalue~\cite{Li, Ob}; the other is that the complex gradient vector fields of 2-eigenfunction of a K\"ahler-Einstein metric on a Fano manifold
are holomorphic and form a reductive Lie algebra \cite{Ma57}.
Our proof can be seen as an analogue of~\cite{Li, Ob, Ma57} where a similar Bochner technique is used to obtain a lower bound for the first nonzero eigenvalue and, in the case of equality,  produce meromorphic vector fields from 2-eigenfunctions (see Section~\ref{ss:merovec}).
Here we prove the following result (see Theorem~\ref{t:lambda} and Proposition~\ref{p:hol}):
\begin{theoremIntro}
For any spherical conical metric $g$, the first nonzero eigenvalue of $\Delta_{g}^{\Hol}$ satisfies $\lambda_{1}\geq 2$. If there is a 2-eigenfunction $\phi \in \calD^{\Hol}$, then the complex gradient vector field $X:=\phi^{,z}\partial_{z}$ is meromorphic on $\Sigma$.
\end{theoremIntro}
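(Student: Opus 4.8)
The plan is to adapt the Bochner--Lichnerowicz--Obata method to the conical setting, exploiting that a spherical metric has Gaussian curvature one and hence $\Ric=g$, which furnishes the positive curvature lower bound. Writing $\Delta_g$ for the nonnegative Laplacian (so that $\Delta_g\phi=2\phi$ with $\lambda\geq 2$, as in the statement), I would begin from the pointwise Bochner formula
\[
\tfrac12\Delta_g|\nabla\phi|^2 = -|\nabla^2\phi|^2 + \langle\nabla\phi,\nabla\Delta_g\phi\rangle - \Ric(\nabla\phi,\nabla\phi),
\]
integrate over $\Sigma$, and use $\Delta_g\phi=\lambda\phi$ to obtain $\int(\Delta_g\phi)^2=\int|\nabla^2\phi|^2+\int\Ric(\nabla\phi,\nabla\phi)$. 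The elementary dimension-two inequality $|\nabla^2\phi|^2\geq\tfrac12(\Delta_g\phi)^2$ (saturated exactly when the trace-free Hessian vanishes), combined with $\int\Ric(\nabla\phi,\nabla\phi)=\int|\nabla\phi|^2=\lambda\int\phi^2$, then gives $\lambda^2\int\phi^2\geq\tfrac12\lambda^2\int\phi^2+\lambda\int\phi^2$, which forces $\lambda\geq 2$.

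The main obstacle is that every integration by parts above takes place on the punctured surface $\Sigma\setminus\{P_1,\dots,P_n\}$, so the boundary contributions at the cone points must be controlled. The plan is to excise coordinate disks $\{|z|\leq\varepsilon\}$ around each $P_j$, run the Bochner computation on the complement, and show that the boundary fluxes $\oint_{\partial}\partial_\nu|\nabla\phi|^2$ and $\oint_{\partial}\langle\nabla\phi,\nu\rangle\,\Delta_g\phi$ are $o(1)$ as $\varepsilon\to 0$. This is exactly where the holomorphic extension is essential: the admissible asymptotic behavior near $P_j$ of functions in $\calD^{\Hol}$, and of their covariant derivatives, is constrained precisely so that these fluxes vanish in the limit. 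Establishing the requisite decay from the indicial/asymptotic structure of $\calD^{\Hol}$ is the technical heart of the argument, and it is the step I expect to be genuinely delicate (an arbitrary self-adjoint extension need not make these boundary terms vanish).

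For the equality case it suffices to treat real-valued $\phi$, since the real and imaginary parts of any $2$-eigenfunction are again $2$-eigenfunctions and $X$ depends linearly on $\phi$. Tracing back through the chain of inequalities, $\lambda=2$ forces saturation of the Kato inequality, i.e.\ the trace-free Hessian of $\phi$ vanishes on $\Sigma\setminus\{P_1,\dots,P_n\}$. In a local conformal coordinate $z$ with $g=e^{2u}|dz|^2$, the $(2,0)$-part of the Hessian is $\phi_{;zz}=\partial_z^2\phi-2(\partial_z u)\partial_z\phi$, and the trace-free condition reads $\phi_{;zz}=0$. One then computes
\[
\del_{\bar z}\bigl(g^{z\bar z}\,\del_{\bar z}\phi\bigr) = g^{z\bar z}\,\overline{\phi_{;zz}} = 0,
\]
so the coefficient $X^z=\phi^{,z}=g^{z\bar z}\del_{\bar z}\phi$ of $X=\phi^{,z}\del_z$ is holomorphic; that is, $X$ is a holomorphic vector field on the punctured surface.

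It remains to upgrade holomorphy on $\Sigma\setminus\{P_1,\dots,P_n\}$ to meromorphy across the cone points. The plan is to invoke once more the local asymptotic expansion of $\phi\in\calD^{\Hol}$ near each $P_j$: the admissible singular terms there are of a controlled, holomorphic-type form, so that $X^z=g^{z\bar z}\del_{\bar z}\phi$ grows at most like a negative power of $z$. A holomorphic function on the punctured disk with at most polynomial growth at the puncture has a pole of finite order, whence $X$ extends meromorphically over each $P_j$ and is meromorphic on all of $\Sigma$. I expect this final removable/pole analysis to be routine once the asymptotics of $\calD^{\Hol}$ are in hand; the decisive difficulty remains the vanishing of the boundary fluxes described above.
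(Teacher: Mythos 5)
Your overall strategy---a Bochner identity exploiting $\Ric=g$, asymptotic control at the cone points, equality forcing a holomorphic object, then power-growth control giving meromorphy---is the same in spirit as the paper's, and your final step (holomorphic on the punctured surface plus polynomial growth implies meromorphic) is exactly the paper's. But the specific identity you chose, the scalar Bochner formula for $|\nabla\phi|^2$ with the full Hessian, breaks down, and the step you defer as ``genuinely delicate'' (vanishing of the boundary fluxes) is not delicate but false. The domain $\calD^{\Hol}$ contains functions whose expansion near a cone point of angle $2\pi\beta$ with $\beta>1$ has negative-power terms $a_k z^k$, $-J\le k<0$, as in~\eqref{e:extensionzH}; these are precisely the terms that distinguish $\calD^{\Hol}$ from $\calD^{\Fr}$, and a priori an eigenfunction of $\Delta_g^{\Hol}$ may carry them. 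For such a mode, say $\phi\sim a_{-1}z^{-1}$, one has $|\nabla\phi|^2\gtrsim e^{-2u}|\partial_z\phi|^2\sim r^{-2-2\beta}$ while $dV_g\sim r^{2\beta-1}\,dr\,d\theta$, so
\[
\int |\nabla\phi|^2\, dV_g \ \sim\ \int_0 r^{-3}\,dr \ =\ \infty.
\]
Hence $\nabla\phi\notin L^2$, $\nabla^2\phi\notin L^2$, $\int\Ric(\nabla\phi,\nabla\phi)=\infty$, and the excised-disk fluxes $\oint\partial_\nu|\nabla\phi|^2$ and $\oint\langle\nabla\phi,\nu\rangle\Delta_g\phi$ blow up rather than tend to zero: your chain of identities becomes a relation between infinite quantities and yields no inequality. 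No refinement of the flux estimates can repair this; the defect is in the choice of quantities. (Consistently, the scalar Bochner quantities already fail to be integrable for Friedrichs eigenfunctions once some angle exceeds $2\pi$, which is exactly the regime where $\lambda_1(\Delta_g^{\Fr})\ge 2$ is known to fail; so any correct proof must use the complex structure of $\calD^{\Hol}$ in a way your scalar identity cannot.)

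The paper's proof makes a structural change at precisely this point: it runs the Bochner argument not on $|\nabla\phi|^2$ but on the $(1,0)$-gradient $X=\phi^{,z}\partial_z=(e^{-2u}\partial_{\bar z}\phi)\,\partial_z$ alone, via the identity $\nabla^*\nabla^{(0,1)}X=\tfrac12(\lambda-2)X$, and justifies the single integration by parts $\int|\nabla^{(0,1)}X|^2=\tfrac12(\lambda-2)\int|X|^2$. This works because the singular part of a $\calD^{\Hol}$ expansion consists of powers of $z$ only, so $\partial_{\bar z}$ annihilates it: a mode-by-mode Fourier computation, using the local developing maps $z^\beta$ (resp.\ M\"obius transforms of $z^n$), shows that $X$ and $\nabla^{(0,1)}X$ are square integrable---exactly the integrability that fails for your full gradient and Hessian. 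Separately, your reduction of the equality case to real-valued $\phi$ is invalid: $\calD^{\Hol}$ is not closed under taking real and imaginary parts (conjugation turns $z^k$-type expansions into $\bar z^k$-type ones), a point the paper stresses in its final section, so $\Re\phi$ and $\Im\phi$ need not be eigenfunctions of $\Delta_g^{\Hol}$ at all. The paper instead proves meromorphy of $X$ directly for complex-valued $2$-eigenfunctions, which is what the stated theorem requires.
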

The existence of such meromorphic vector fields indicate some symmetry of the metric itself. These vector fields can also be viewed as generators of gauge transformations, which are obstructions in solving the nonlinear uniformization problem (see examples and discussion in Section~\ref{s:sp}).


If in addition the 2-eigenfunction is real-valued, we then show that the metric is reducible by relating the meromorphic vector field to a developing map.
As an application we are also able to show that the dimension of the space of real-valued 2-eigenfunctions is completely determined by the monodromy (see Theorem~\ref{thm:hol_eigenspace}):
\begin{theoremIntro}
For a reducible spherical metric $g$, the dimension of real-valued 2-eigenfunctions of $\Delta_{g}^{\Hol}$ is either 1 or 3. The dimension equals to 3 if and only if $g$ is trivially reducible.
\end{theoremIntro}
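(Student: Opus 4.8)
The plan is to reduce the dimension count to a representation-theoretic computation on the round sphere, transporting everything from $\Sigma$ to $\overline{\CC}$ along the developing map. Since $g$ is assumed reducible, Theorem~\ref{t:reducible} supplies a developing map $f\colon \Sigma\setminus\supp D\to\overline\CC$ whose monodromy group $\Gamma$ lies in $\mathrm{U}(1)=\{w\mapsto e^{i\theta}w\}$; after the normalization built into reducibility we may take every element of $\Gamma$ to fix $0$ and $\infty$. On $(\overline\CC, g_{\rm st})$ the space $V$ of real-valued $2$-eigenfunctions is exactly the three-dimensional space of restrictions of linear functions, with basis the coordinate functions
\[
\phi_1=\frac{w+\bar w}{1+|w|^2},\qquad \phi_2=\frac{-i(w-\bar w)}{1+|w|^2},\qquad \phi_3=\frac{1-|w|^2}{1+|w|^2}.
\]
The isometry group $\mathrm{PSU}(2)\cong\mathrm{SO}(3)$ acts on $V$ through the standard representation of $\mathrm{SO}(3)$ on $\RR^3$, and $\mathrm{U}(1)\subset\mathrm{PSU}(2)$ acts as the group of rotations about the $\phi_3$-axis.

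The key correspondence is the following. Because $f$ is a local isometry from $(\Sigma, g)$ to $(\overline\CC, g_{\rm st})$, the Laplacian commutes with pullback, so for any $\psi\in V$ the function $\psi\circ f$ is a $2$-eigenfunction of $\Delta_g$ on $\Sigma\setminus\supp D$; it is single-valued on $\Sigma\setminus\supp D$ precisely when $\psi$ is invariant under $\Gamma$. Moreover the complex gradient is natural under the developing map: a direct computation gives $f_*\bigl((\psi\circ f)^{,z}\partial_z\bigr)=\psi^{,w}\partial_w$, so $\psi\mapsto\psi\circ f$ carries the holomorphic vector fields $\psi^{,w}\partial_w$ on $\overline\CC$ to the meromorphic vector fields produced by Theorem~\ref{t:lambda}; for instance $\phi_3^{,w}\partial_w$ is a nonzero multiple of $w\partial_w$, which spans the space of $\mathrm{U}(1)$-invariant holomorphic vector fields on $\overline\CC$. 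I would use the construction in the proof of Theorem~\ref{t:reducible}, together with the local analysis of the holomorphic domain, to show that $\psi\mapsto\psi\circ f$ is a linear isomorphism from the $\Gamma$-invariant subspace $V^\Gamma$ onto the space of real-valued $2$-eigenfunctions in $\calD^{\Hol}$: injectivity is immediate since $\psi\circ f=0$ forces $\psi=0$, surjectivity is the content of Theorem~\ref{t:reducible}, and well-definedness requires checking that each bounded eigenfunction $\psi\circ f$ has the correct asymptotics at the cone points to lie in $\calD^{\Hol}$.

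Granting this isomorphism, the problem becomes the computation of $\dim_\RR V^\Gamma$. Since every element of $\Gamma\subset\mathrm{U}(1)$ is a rotation about the single common $\phi_3$-axis, there are two cases. If some element of $\Gamma$ is nontrivial, then its fixed subspace in $V\cong\RR^3$ is exactly the axis $\RR\phi_3$, and since all other elements fix this axis as well we get $V^\Gamma=\RR\phi_3$, of dimension one. If every element of $\Gamma$ is trivial, that is, $g$ is trivially reducible, then $V^\Gamma=V$ has dimension three. This yields the dichotomy $\dim=1$ or $3$, and shows $\dim=3$ if and only if $\Gamma$ is trivial, i.e.\ if and only if $g$ is trivially reducible.

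The main obstacle is the surjectivity and domain-matching in the second paragraph, namely verifying that the correspondence lands precisely in $\calD^{\Hol}$ and accounts for every real-valued $2$-eigenfunction there. The forward direction requires analyzing the local behavior of $\psi\circ f$ near each $P_j$, where $f$ behaves like $z^{\pm\beta_j}$, and matching the resulting expansion against the defining asymptotics of the holomorphic extension; the reverse direction is where Theorem~\ref{t:reducible} is essential, since it guarantees that an abstract real-valued $2$-eigenfunction in $\calD^{\Hol}$ actually arises from a multiplicative developing map, hence from some $\psi\in V^\Gamma$. Once this identification is in place, the remaining representation theory of $\mathrm{SO}(3)$ is elementary.
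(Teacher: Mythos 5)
Your proposal is correct and follows essentially the same route as the paper: the decisive input in both is that, by the construction behind Theorem~\ref{t:reducible} (Lemma~\ref{lemma:char}, equation~\eqref{equ:eigenf}), every real-valued $2$-eigenfunction in $\calD^{\Hol}$ equals $C\,\frac{1-|\tilde f|^2}{1+|\tilde f|^2}$ for some multiplicative developing map $\tilde f$, while the lower bounds come from pulling back the round-sphere eigenfunctions and checking their cone-point asymptotics as in Section~\ref{s:eigen}. Your fixed-subspace computation $\dim V^{\Gamma}\in\{1,3\}$ is a uniform repackaging of the paper's two cases, which enumerate the multiplicative developing maps by hand ($f$ and $1/f$ when the monodromy is nontrivial, the full $\mathrm{PSU}(2)$-orbit when it is trivial).
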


We point out here that the real-valued 2-eigenfunctions we find are actually in the domains of both the Friedrichs extension and the holomorphic extension. Therefore all reducible metrics are included in the obstructed case as discussed in~\cite{MaZh19}. However, having $2$ in the spectrum of the Friedrichs Laplacian does {\em not} imply that the monodromy is reducible, and in fact there is evidence that there exist irreducible metrics with~2 in their Friedrichs spectrum.
In addition the assumption in Theorem A that the eigenfunction is real-valued is  also essential. See more discussion in Section~\ref{s:discussion}.


We also mention another type of metrics called HCMU metrics. The Gaussian curvature functions of these metrics behave similarly to the real-valued eigenfunctions discussed above. There is also a corresponding existence of meromorphic vector fields and character 1-forms. See~\cite{Che00, LiZhu02, CCW, CW, CWX} and the references therein for details.

This paper is organized as follows. In Section~\ref{s:sa} we describe various self-adjoint extensions of the Laplace--Beltrami operator of a spherical conical metric. In Section~\ref{s:eigen} we construct appropriate eigenfunctions assuming the metric is reducible. In Section~\ref{s:sp} we prove a lower bound for eigenvalues,  and in the case of equality we prove the reducible monodromy property by producing a meromorphic vector field from a real-valued 2-eigenfunction which is then related to a developing map. In Section~\ref{s:discussion} we discuss the relation of our work to existing works and open problems.

\section{Self-adjoint extensions of the operator $\Delta_{g}$}\label{s:sa}

Consider the Laplace--Beltrami operator of a spherical conical metric $g$, denoted by $\Delta_{g}$, acting on $\calC_{c}^{\infty}(\Sigma\backslash {\rm supp}\, D)$.  Locally near a cone point of angle $2\pi\beta$, the metric is given by the geodesic coordinates as
$$g=d\frakr^{2}+\beta^{2}\sin^{2} \frakr d\theta^{2} , \ (\frakr, \theta) \in (0,\epsilon)\times \RR/2\pi\ZZ.$$
Under these coordinates the Laplace--Beltrami operator is locally given by
$$
\Delta_{g}=-\partial_{\frakr}^{2} - \frac{\cos \frakr}{\sin \frakr}\partial_{\frakr} -\frac{1}{\beta^{2}}\frac{1}{\sin^{2} \frakr} \partial_{\theta}^{2}.
$$

This operator is of conical type, which has been extensively studied~\cite{Cheeger, BS1, BS2, Mooers}. Conical operators can be viewed as rescaled versions of b-operators~\cite{Me}.
We now briefly recall some notation here. Let $\Sigma_{D}:=[\Sigma; {\rm supp}\, D]$ be the surface $\Sigma$ with cone points blown up, that is, each puncture is replaced by a circle and
polar coordinates are introduced near the cone points. Denote by $\calV_{b}$ the b-vector fields on $\Sigma_{D}$, which are smooth in the interior and locally given by a basis of $\{\frakr\partial_{\frakr}, \partial_{\theta}\}$ near the punctures. Let $\Diff^{m}_{b}(\Sigma_{D})$ be the space of b-differential operators of order not exceeding $m$, locally of the form
\[
A = \sum_{j + \ell \leq m} a_{j \ell}(\frakr,\theta) (\frakr\del_\frakr)^j \del_\theta^\ell,\, a_{j\ell} \in \calC^\infty(\Sigma_{D}).
\]
A conical operator is a rescaled version of a b-operator, given by elements in $\frakr^{-m}\Diff^{m}_{b}(\Sigma_{D})$. In particular, the Laplace operator $\Delta_{g}$ can be written as $-\frakr^{-2}[(\frakr\partial_{\frakr})^{2}+\beta^{-2}\partial_{\theta}^{2}]+\dots$ where the remaining terms are smooth multiples of $\frakr^2\del_\frakr$ and $\frakr \del_\theta$ hence lower order, therefore $\Delta_{g}\in \frakr^{-2}\Diff^{2}_{b}(\Sigma_{D})$. Let $L_{b}^{2}(\Sigma_{D})$ be the $L^{2}$ space with respect to the b-measure which is locally given by $\frac{d\frakr}{\frakr}\otimes d\theta$. Note that it is related to the $L^{2}$ space associated with $d\Vol_{g}$ by the following relation: $\frakr^{-1}L_{b}^{2}(\Sigma_{D})=L^{2}(\Sigma_{D}, d\Vol_{g})$.
We also denote by $H_{b}^{\ell}(\Sigma_{D})$ the b-Sobolev space with respect to the b-operators. That is,
$$
H_{b}^{\ell}(\Sigma_{D})=\{u\in L^{2}_{b}(\Sigma_{D})|V u \in L^{2}_{b}(\Sigma_{D}),\  \forall V\in \Diff^{\ell}_{b}(\Sigma_{D})
\}.
$$
Using such b-based spaces for conic operators has certain advantages, as these functions satisfy dilation invariance properties.

There is a well-developed theory of self-adjoint extensions of symmetric operators in the setting of manifolds with conical singularities, c.f. ~\cite{Lesch, GM, GKM2, GKM}. For the case of Laplace--Beltrami operators in this paper, we also refer to~\cite{Hill, HK} for the theory on flat conical surfaces.  Since such extensions only concern  the local behavior near each cone point and the leading part of $\Delta_{g}$ is the same as in the flat case,
the expansions later in this section follow from exactly the same computation.

The closure of $\Delta_{g}$ in $\frakr^{-1}L^{2}_{b}(\Sigma_{D})$ with respect to the graph norm is a symmetric operator
\begin{multline*}
\Delta_{g}^{\min}: \calD^{\min} \rightarrow \frakr^{-1}L^{2}_{b}, \\ \calD^{\min}=\overline{\calC_{c}^{\infty}(\Sigma\setminus {\rm supp}\, D)} \text{ with respect to } \|u\|_{\frakr^{-1}L^{2}_{b}}+\|\Delta_{g}u\|_{\frakr^{-1}L^{2}_{b}},
\end{multline*}
while there is another domain
\begin{multline*}
\Delta_{g}^{\max}: \calD^{\max} \rightarrow \frakr^{-1}L^{2}_{b}, \\ \calD^{\max}=\{u\in \frakr^{-1}L^{2}_{b}: \Delta_{g}u\in \frakr^{-1}L^{2}_{b} \text{ in the distributional sense, i.e. }\\
\exists\phi \in \frakr^{-1}L^{2}_{b}, \text{ s.t. }\forall v\in \calD^{\min}, (\Delta_{g}v, u)=(v,\phi)\}.
\end{multline*}
In other words, $\calD^{\max}$ is the dual space of $\calD^{\min}$ with respect to the $L^{2}$ product.

There is a complete description of $\calD^{\min}$ and $\calD^{\max}$ in~\cite{GM}. In particular, we have
\begin{proposition}[{\cite[Lemma 3.5, Proposition 3.6, Lemma 3.11]{GM}}]
The minimal and maximal domains of $\Delta_{g}$ satisfy the following conditions:
\begin{enumerate}
\item $\calD^{\min}=\calD^{\max} \bigcap \big(\cap_{\epsilon>0} \frakr^{1-\epsilon}H_{b}^{2}(\Sigma_{D})\big)$;
\item $\frakr H_{b}^{2}(\Sigma_{D})\subset \calD^{\min}$;
\item $\exists \epsilon>0$ such that $\calD^{\max} \hookrightarrow \frakr^{-1+\epsilon} H_{b}^{2}(\Sigma_{D})$.
\end{enumerate}
\end{proposition}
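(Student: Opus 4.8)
The plan is to reduce the statement to the standard Mellin/indicial analysis for b-elliptic operators and then read off all three items from the location of the indicial roots relative to the weight $-1$, which is the $\frakr^{-1}L^2_b$ threshold. Since $\Delta_{g}\in\frakr^{-2}\Diff^2_b(\Sigma_D)$, the operator $P:=\frakr^2\Delta_{g}$ is an honest b-differential operator which is b-elliptic: in the fiber variables dual to $\frakr\partial_\frakr,\partial_\theta$ its b-principal symbol is $\xi^2+\beta^{-2}\eta^2$, which is nondegenerate. First I would compute the indicial family of $P$ at each cone point: freezing coefficients at $\frakr=0$, taking the Mellin transform in $\frakr$ and decomposing into Fourier modes $e^{ik\theta}$, the indicial operator acts on the $k$-th mode by $s\mapsto-(s^2-\beta^{-2}k^2)$. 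Hence the boundary spectrum is
$$ \{\pm k/\beta:\ k\in\NN_0\}, $$
with a \emph{double} root at $s=0$ coming from $k=0$. The thresholds are fixed by elementary integrability: $\frakr^s\in\frakr^{-1}L^2_b$ iff $\mathrm{Re}\,s>-1$, while a monomial lies in $\bigcap_{\epsilon>0}\frakr^{1-\epsilon}H^2_b$ iff $\mathrm{Re}\,s\geq 1$, so the interesting window of exponents is the open strip $-1<\mathrm{Re}\,s<1$.

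Next I would invoke b-elliptic regularity in the form used in \cite{GM} (Melrose's parametrix construction): every $u\in\calD^{\max}$ has a partial polyhomogeneous expansion near each cone point,
$$ u=\sum_{-1<s<1}\sum_{p}c_{s,p}\,\frakr^{s}(\log\frakr)^{p}e^{ik\theta}+u',\qquad u'\in\bigcap_{\epsilon>0}\frakr^{1-\epsilon}H^2_b(\Sigma_D), $$
a finite sum over the indicial roots $s$ in the window, with $\log$ powers occurring only at the double root $s=0$. This expansion is the technical heart of the argument: it identifies $\calD^{\max}/\big(\bigcap_\epsilon\frakr^{1-\epsilon}H^2_b\big)$ with the finite-dimensional space spanned by the indicial monomials of exponent in $(-1,1)$, and it turns all three statements into bookkeeping on those monomials.

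With the expansion in hand the three items follow. For (3), the slowest possible decay of an element of $\calD^{\max}$ is governed by the smallest indicial root strictly above $-1$; setting $\epsilon_0$ to be the minimum over the finitely many cone points of the gap between $-1$ and that root yields $\calD^{\max}\hookrightarrow\frakr^{-1+\epsilon_0}H^2_b(\Sigma_D)$. For (2), since b-operators preserve weights, $\Delta_{g}=\frakr^{-2}P$ maps $\frakr H^2_b$ into $\frakr^{-1}L^2_b$, so $\frakr H^2_b\subset\calD^{\max}$; as these functions vanish to order one, cutting them off near each cone point gives an approximating sequence in $\calC_c^\infty(\Sigma\setminus\supp D)$ that converges in the graph norm, whence $\frakr H^2_b\subset\calD^{\min}$. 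For (1), I would characterize $\calD^{\min}$ as the set of $u\in\calD^{\max}$ all of whose boundary coefficients $c_{s,p}$ vanish, which is exactly the condition $u\in\bigcap_\epsilon\frakr^{1-\epsilon}H^2_b$. The inclusion $\supseteq$ is a refinement of the cutoff argument of (2) applied to a pure remainder; the inclusion $\subseteq$ uses the Green sesquilinear form $B(u,v)=(\Delta_g u,v)-(u,\Delta_g v)$, for which $\calD^{\min}=\{u\in\calD^{\max}:B(u,v)=0\ \forall v\in\calD^{\max}\}$, together with the fact that $B$ pairs the monomial of exponent $s$ against that of exponent $-s$, both lying in the symmetric window $(-1,1)$; nonvanishing of any coefficient would then produce a nonzero pairing.

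The hard part will be the polyhomogeneous expansion itself and, within it, the borderline bookkeeping. One must rule out indicial roots sitting exactly on the lines $\mathrm{Re}\,s=\pm1$ (which occur when $\beta$ is an integer and $k=\beta$), handle the resonances produced by roots differing by integers, and track the logarithmic term at the double root $s=0$ that distinguishes the constant mode from $\log\frakr$ — precisely the datum that later separates the Friedrichs extension $\Delta_g^{\Fr}$ from the holomorphic extension $\Delta_g^{\Hol}$. Establishing the nondegeneracy of $B$ on $\calD^{\max}/\calD^{\min}$ together with the pairing law $s\leftrightarrow-s$, and the uniformity of $\epsilon_0$ across the finitely many cone points, is what delivers the three clean statements. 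All of this is carried out in \cite{GM} in the general conic setting; since the leading part of $\Delta_g$ coincides with the flat model, the computations transfer verbatim.
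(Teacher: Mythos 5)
Your proposal is correct and takes essentially the same route as the paper, which offers no proof of its own but cites Gil--Mendoza \cite{GM}: the Mellin/indicial-root analysis you outline (boundary spectrum $\{\pm k/\beta\}$ with the double root at $0$, the window $-1<\mathrm{Re}\,s<1$, the partial expansion of $\calD^{\max}$ modulo $\bigcap_{\epsilon>0}\frakr^{1-\epsilon}H^2_b$, the Green-pairing characterization of $\calD^{\min}$) is precisely the argument of that reference, and it reproduces the coefficient expansion $(a_k,b_k)$ that the paper records immediately after the proposition. The only points to watch are cosmetic: $\epsilon_0$ must be taken strictly smaller than the minimal gap, and the borderline roots at $\mathrm{Re}\,s=\pm1$ for integer $\beta$ are not ``ruled out'' but rather absorbed (the $+1$ monomial lies in $\calD^{\min}$ by your item (2), while the $-1$ monomial fails to be in $\frakr^{-1}L^2_b$), exactly as reflected in the paper's convention $J=\beta-1$ for $\beta\in\NN$.
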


Near a cone point of angle $2\pi\beta$, we can write out the expansion of an element in $\calD^{\max}$ as following:
\begin{multline}
u_{\max}= a_{0}+ b_{0} \log \frakr \\+ \sum_{1\leq |k|\leq J} |k|^{-1/2}a_{k} \frakr^{|k|/\beta}e^{ik\theta}+\sum_{1\leq |k|\leq J}|k|^{-1/2} b_{k}\frakr^{-|k|/\beta} e^{ik\theta} + \tilde u,\\ \tilde u\in \calD^{\min}, \ a_{k}, b_{k} \in \CC
\end{multline}
where $J=[\beta]$ if $\beta\notin \NN$ and $J=\beta-1$ if $\beta\in \NN$. When there are multiple cone points, we use the notation $(a_{k}^{i}, b_{k}^{i})_{-J_{i}\leq k\leq J_{i}}$ for the expansion near $P_{i}$.
We refer to~\cite[Proposition 3.3]{Hill} for the explicit computation that justifies the above expansion. Note that if $\beta<1$, the only coefficients remaining are $(a_{0}, b_{0})$.

The classical Von Neumann theory~\cite{RS1, RS2} shows that any self-adjoint extension of $\Delta_{g}$ is a space between $\calD^{\min}$ and $\calD^{\max}$,  and has a one-to-one correspondence with the Lagrangian in the space of coefficients
$$\bigcup_{\substack{0\leq |k|\leq J_{i}\\1\leq i\leq n}}\{a_{k}^{i}, b_{k}^{i}\}\in \CC^{2J}, \, J=\sum_{i=1}^{n} (2J_{i}+1).$$
Here the symplectic pairing is given by
\begin{equation}
\begin{split}
&\Omega(\vec A, \vec A'):=\langle A_{+}, A'_{-}\rangle- \langle A_{-}, A'_{+}\rangle = \sum  (a_{k}^{i}\overline{ {b_{k}^{i}}' }- b_{k}^{i} \overline{{a_{k}^{i}}'}), \\
& \vec A=\left(a_{-J_{1}}^{1}, \dots, a_{k}^{i}, \dots,  a_{J_{n}}^{n},b_{-J_{1}}^{1}, \dots, b_{k}^{i}, \dots,  b_{J_{n}}^{n}\right)=:(A_{+}, A_{-}).
\end{split}
\end{equation}
where $\langle \cdot, \cdot\rangle$ is the inner product in $\CC^{J}$.

In particular, there are two self-adjoint extensions we are going to use:
\begin{definition}\label{d:Dfr}
The domain of the Friedrichs extension $\calD^{\Fr}$ consists of all bounded elements $u\in \calD^{\max}$, i.e. any function with an expansion
$$
u=a_{0}+\sum_{1\leq |k|\leq J} |k|^{-1/2}a_{k} \frakr^{|k|/\beta}e^{ik\theta} +\tilde u, \ \tilde u\in \calD^{\min}, \ a_{k} \in \CC.
$$
\end{definition}
\begin{definition}\label{d:Dhol}
The domain of holomorphic extension $\calD^{\Hol}$  consists of functions with expansion
\begin{multline}
u= a_{0}+\sum_{1\leq k\leq J} |k|^{-1/2}a_{k} \frakr^{|k|/\beta}e^{ik\theta} + \sum_{1\leq -k\leq J} |k|^{-1/2}b_{k} \frakr^{-|k|/\beta}e^{ik\theta}+ \tilde u,\\
 \tilde u\in \calD^{\min}, \ a_{k}, b_{k}\in \CC.
\end{multline}
\end{definition}
We denote by $\Delta_{g}^{\Fr}$ and $\Delta_{g}^{\Hol}$ the two self-adjoint operators associated to the domain $\calD^{\Fr}$ and $\calD^{\Hol}$.

In terms of complex coordinate $z=|z|e^{i\theta}$ where $|z|\sim \frakr^{1/\beta}$, the two expansions above can be rewritten as
\begin{align}
u\in \calD^{\Fr}\Leftrightarrow u=a_{0}+ \sum_{1\leq k\leq J} (a_{k}z^{k} +b_{k}\bar z^{k})+\calO(|z|^{J+1}), \label{e:extensionzF}\\
u\in \calD^{\Hol}\Leftrightarrow u=a_{0}+\sum_{1\leq |k|\leq J} a_{k} z^{k} +\calO(|z|^{J+1}).\label{e:extensionzH}
\end{align}
Notice that if all $\beta_{i}<1$, then  the only nontrivial coefficient is $a_{0}$, in which case $\calD^{\Fr}=\calD^{\Hol}$.

\section{A 2-eigenfunction in the domain}\label{s:eigen}
In this section we construct a 2-eigenfunction for reducible metrics.
\begin{theorem}\label{t:eigen}
If $g$ is a spherical conical metric on $\Sigma$ with reducible monodromy, then $2\in \Spec(\Delta_{g}^{\Fr})\cap \Spec(\Delta_{g}^{\rm Hol})$.
\end{theorem}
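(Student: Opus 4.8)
The plan is to exhibit the eigenfunction explicitly by pulling back, along a multiplicative developing map, the distinguished $\mathrm U(1)$-invariant spherical harmonic of the round sphere. Since $g$ is reducible, I fix a developing map $f\colon\Sigma\setminus\supp D\to\overline{\CC}$ with monodromy in $\mathrm U(1)=\{w\mapsto e^{i\theta}w\}$ and with $g=f^{*}g_{\rm st}$. The eigenvalue-$2$ eigenspace of the round Laplacian $\Delta_{g_{\rm st}}$ is spanned by the three first spherical harmonics, i.e. the Euclidean coordinate functions restricted to $\bS^{2}$; the one fixed by the rotations $w\mapsto e^{i\theta}w$ is the height function $\phi_{0}(w)=\frac{1-|w|^{2}}{1+|w|^{2}}$. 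I then set $\phi:=\phi_{0}\circ f$.

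First I would verify that $\phi$ is a single-valued real-valued $2$-eigenfunction on the punctured surface. Single-valuedness follows at once from $\phi_{0}(e^{i\theta}w)=\phi_{0}(w)$, so the $\mathrm U(1)$-monodromy of $f$ is annihilated by the invariance of $\phi_{0}$; reality and the bound $|\phi|\le 1$ are clear. Because $f$ is a local isometry onto $(\overline{\CC},g_{\rm st})$ and the Laplace--Beltrami operator commutes with isometric pullback, $\Delta_{g}\phi=(\Delta_{g_{\rm st}}\phi_{0})\circ f=2\phi$ on $\Sigma\setminus\supp D$. Membership in $\calD^{\Fr}$ is then essentially free: the spherical conical metric has finite area, so $\phi$ and $\Delta_{g}\phi=2\phi$ lie in $L^{2}(\Sigma_{D},d\Vol_{g})=\frakr^{-1}L^{2}_{b}$, giving $\phi\in\calD^{\max}$; being bounded, $\phi\in\calD^{\Fr}$ by Definition~\ref{d:Dfr}, and hence $2\in\Spec(\Delta_{g}^{\Fr})$.

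The substance of the argument is to show $\phi\in\calD^{\Hol}$, which by \eqref{e:extensionzH} means that near each $P_{j}$ the expansion of $\phi$ carries no decaying antiholomorphic term $\bar z^{k}$ with $1\le k\le J$ (the logarithmic and pole terms being already excluded by boundedness). Here I would use the local normal form of a multiplicative developing map and split into two cases. If $\beta_{j}\notin\NN$ the local monodromy $e^{2\pi i\beta_{j}}$ is a nontrivial rotation, so $f(P_{j})$ must be one of its fixed points $0$ or $\infty$; writing $f=z^{\beta_{j}}g(z)$, respectively $1/f=z^{\beta_{j}}g(z)$, with $g$ holomorphic and nonvanishing, one finds $|f|^{\pm2}=|z|^{2\beta_{j}}|g|^{2}$ and therefore $\phi=\pm1+\calO(\frakr^{2})$, using $|z|^{\beta_{j}}\sim\frakr$, so that $\phi\mp1\in\frakr H^{2}_{b}\subset\calD^{\min}$. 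If $\beta_{j}\in\NN$ the local monodromy is trivial, $f$ is single-valued with $f(P_{j})=w_{0}$ arbitrary, $f-w_{0}=cz^{\beta_{j}}+\dots$, and $\phi=\phi_{0}\circ f$ is real-analytic in $(z,\bar z)$ whose expansion begins only at order $z^{\beta_{j}},\bar z^{\beta_{j}}$ with $\beta_{j}=J+1$. In either case every term of order $\le J$ vanishes except the constant $a_{0}$, so $\phi-a_{0}\in\calD^{\min}$ and $\phi\in\calD^{\Hol}$, giving $2\in\Spec(\Delta_{g}^{\Hol})$.

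The main obstacle I anticipate lies in this last local analysis: justifying the normal form $f=z^{\beta_{j}}g(z)$ of the multiplicative developing map from property~(3) of the developing map, and, more delicately, confirming that the borderline contributions — the $\frakr^{2}$-terms coming from $|z|^{2\beta_{j}}|g|^{2}$, and the $z^{\beta_{j}}$-terms of size $\frakr$ in the integer-angle case — genuinely fall into $\calD^{\min}$ rather than contributing indicial coefficients $a_{k},b_{k}$. This amounts to controlling the interaction between the non-indicial powers $|z|^{2\beta_{j}}$ and the indicial roots $|z|^{|k|}$, $|k|\le J$, which is exactly the place where the expansions and the characterizations of $\calD^{\min},\calD^{\max}$ recalled in Section~\ref{s:sa} are needed.
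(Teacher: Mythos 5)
Your proposal is correct and takes essentially the same route as the paper: the paper constructs the very same function $\phi=\frac{1-|f|^{2}}{1+|f|^{2}}$ from a multiplicative developing map, proves single-valuedness from the ${\rm U}(1)$ monodromy, and verifies $\phi\in\calD^{\Fr}\cap\calD^{\Hol}$ through the same local normal forms ($f=z^{\beta}$ for non-integer angles and $f=\frac{az^{n}+b}{cz^{n}+d}$ for integer angles, quoted from~\cite{CWWX2015}), concluding with $\Delta_{g}\phi=2\phi$. The only cosmetic differences are that you obtain the eigenvalue equation from naturality of the Laplacian under the local isometry $f$ rather than by direct computation, and you derive the normal form via the fixed-point argument for the local monodromy where the paper cites the proof of Theorem~1.4 in~\cite{CWWX2015}.
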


Let $g$ be a reducible metric representing $D$ on $\Sigma$. Then there exists
a developing map of $g$, written as $f:\Sigma\backslash {\rm supp}\, D\to {\overline {\Bbb C}}$, such that the monodromy of $f$ is contained in $U(1)$.
Hence the metric $g$ can be written as the pullback by $f$ of the standard spherical metric $g_{\rm st}=\frac{4|dw|^2}{(1+|w|^2)^2}$, i.e.
\begin{equation}\label{e:gst}
g=\frac{4|f'|^{2}}{(1+|f|^{2})^{2}}|dz|^{2}.
\end{equation}
Its Laplace--Beltrami operator is then given by
\begin{equation}\label{e:Deltag}
\Delta_{g}=-\frac{(1+|f|^{2})^{2}}{|f'|^{2}}\partial_{z}\partial_{\bar z}.
\end{equation}

Consider the following function on $\Sigma\setminus {\rm supp}\, D$:
\begin{equation}\label{e:eigen}
\phi=\frac{1-|f|^{2}}{1+|f|^{2}}.
\end{equation}
Since $f$ is a multi-valued developing map, $\phi$ is also potentially multi-valued on $\Sigma\setminus {\rm supp}\, D$. Moreover, a priori $\phi$ is only defined on $\Sigma\setminus {\rm supp}\, D$. However, we have the following lemma for reducible metrics:
\begin{lemma}
If $g$ has reducible monodromy, then $\phi$ is a well-defined single-valued function on $\Sigma$.
\end{lemma}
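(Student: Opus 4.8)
The plan is to treat separately the two sources of potential multivaluedness: the monodromy of $f$ over the punctured surface $\Si\setminus{\rm supp}\,D$, and the singular behavior at the cone points $P_j$ themselves. The unifying observation is that $\phi=\Phi\circ f$, where $\Phi\colon\overline{\CC}\to[-1,1]$ is the function $\Phi(w)=\frac{1-|w|^2}{1+|w|^2}$. This $\Phi$ is continuous on the whole Riemann sphere (with the convention $\Phi(\infty)=-1$) and, crucially, is invariant under the $U(1)$-action $w\mapsto e^{i\theta}w$, since $|e^{i\theta}w|=|w|$.

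First I would establish single-valuedness away from the cone points. Because $g$ is reducible, the developing map $f$ can be chosen with monodromy contained in $U(1)$, so any two branches of $f$ over $\Si\setminus{\rm supp}\,D$ differ by a transformation $w\mapsto e^{i\theta}w$. Applying $\Phi$ and using its $U(1)$-invariance, the two branches yield the same value of $\phi=\Phi\circ f$; hence $\phi$ is a well-defined single-valued (and, being a composition of continuous maps, continuous) function on $\Si\setminus{\rm supp}\,D$, with $|\phi|\le 1$.

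Next I would show that $\phi$ extends continuously across each cone point $P_j$. Here I would use property (3): since the Schwarzian derivative of $f$ has principal singular term $\frac{1-\beta_j^2}{2z^2}$, the developing map is, up to post-composition with a constant M\"obius transformation $M_j$ and a local holomorphic change of coordinate $\zeta$ with $\zeta(P_j)=0$, of the normal form $f=M_j\bigl(\zeta^{\beta_j}\bigr)$, the sign of the exponent being absorbable into $M_j$. Because $\beta_j>0$, each branch of $\zeta^{\beta_j}$ tends to $0$ as $z\to P_j$, so each branch of $f$ tends to the single point $w_j:=M_j(0)\in\overline{\CC}$. The distinct branch-limits all lie on one $U(1)$-orbit — consistent with the local monodromy lying in $U(1)$ — on which $\Phi$ is constant, so $\phi=\Phi\circ f\to\Phi(w_j)$ independently of the branch. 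Thus setting $\phi(P_j):=\Phi(w_j)$ extends $\phi$ continuously across $P_j$.

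Combining the two steps yields a well-defined single-valued continuous function $\phi$ on all of $\Si$, bounded by $1$ in absolute value. I expect the main obstacle to be the justification of the local normal form at the cone points, in particular the integer-angle case $\beta_j\in\NN$, where the indicial roots of the associated second order ODE differ by an integer and a logarithmic factor could a priori appear; however, even allowing such a factor the leading behavior $\zeta^{\pm\beta_j}$ (times slowly varying terms) still forces a definite limit in $\overline{\CC}$ because $\beta_j\neq 0$, and the $U(1)$-invariance of $\Phi$ absorbs the residual phase ambiguity among branches. The positivity of all $\beta_j$ together with the $U(1)$-invariance of $\Phi$ are the two facts that make the extension work uniformly.
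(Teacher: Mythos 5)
Your proposal is correct and follows essentially the same route as the paper: single-valuedness away from the cone points via the $U(1)$-invariance of $w\mapsto\frac{1-|w|^2}{1+|w|^2}$, and continuous extension across the cone points via the local behavior of the developing map. The only difference is one of packaging: the paper delegates the extension step to \cite[Theorem 1.4]{CWWX2015}, whereas you rederive it from the Schwarzian normal form $\frakf=M_j(\zeta^{\beta_j})$ near each $P_j$, which is precisely the content of that cited result.
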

\begin{proof}
Take a representative $\mathfrak{f}$ of $f$ outside the branching points of $f$. Since the monodromy of $f$ is contained in $U(1)$, it is  straight-forward to check that $|\mathfrak{f}|$, and hence $\phi$, is single-valued on $\Sigma$ outside the branching points of $f$.  In addition, the definition of $\phi$ is independent of choice of $\mathfrak{f}$. By \cite[Theorem 1.4]{CWWX2015}, $\phi$ extends continuously to $\Sigma$.
\end{proof}

\begin{remark}
When $g$ is irreducible, the function $\phi$ defined above is {\it not} single-valued.
\end{remark}

\begin{lemma}
Near a cone point of angle $2\pi\beta$, if $\beta\notin\NN$, then there exists one complex coordinate $z$ and one representative $\frakf$ of $f$ such that $\frakf=z^{\beta}$; if $\beta\in \NN$, then there exists one complex coordinate $z$, a $\rm{PSL}(2,\CC)$ matrix $\left(
\begin{array}{cc}
a & b\\
c & d
\end{array}
\right)$ and one representative $\frakf$ of $f$ such that $\frakf=\frac{az^{\beta}+b}{cz^{\beta}+d}$.
\end{lemma}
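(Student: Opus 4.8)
The plan is to read off the local structure of $f$ from its Schwarzian derivative and then exploit the reducibility hypothesis. First I would record that, since the monodromy of $f$ lies in ${\rm PSU}(2)$ and the Schwarzian derivative is invariant under post-composition with M\"obius transformations, the function $S(f)$ is single-valued and meromorphic near $P_j$; by the cone-angle property~(3) it has a double pole with principal part $\frac{1-\beta^2}{2z^2}$. Writing the associated Fuchsian equation $y''+\tfrac12 S(f)\,y=0$, the indicial equation at $z=0$ reads $(r-\tfrac12)^2=\tfrac{\beta^2}{4}$, so the two exponents are $\tfrac{1\pm\beta}{2}$, differing by $\beta$, and $f$ is locally the ratio of two solutions, well-defined up to a M\"obius factor.

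Next I would run the Frobenius construction. When $\beta\notin\NN$ the exponents do not differ by an integer, so there are two genuine power-series solutions $y_{\pm}=z^{(1\pm\beta)/2}(1+\calO(z))$; their ratio is $\psi=z^{\beta}H(z)$ with $H$ holomorphic and $H(0)\neq 0$. The change of coordinate $w=zH(z)^{1/\beta}$ is legitimate because $H(0)\neq0$ lets us choose a holomorphic $\beta$-th root and $w'(0)=H(0)^{1/\beta}\neq0$, and it turns $\psi$ into exactly $w^{\beta}$, so every representative takes the form $\frakf=\frac{aw^{\beta}+b}{cw^{\beta}+d}$. When $\beta\in\NN$ the exponents are resonant and a logarithmic term could a priori enter the lower solution; but a log term would force the local monodromy of $f$ to be parabolic, which is impossible since the monodromy lies in ${\rm PSU}(2)$, all of whose nonidentity elements are elliptic (equivalently, the continuity of $u-(\beta_j-1)\log|z|$ rules out the extra logarithm). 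Hence there is again no log and the same argument gives $\frakf=\frac{aw^{\beta}+b}{cw^{\beta}+d}$.

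It then remains to remove the M\"obius factor in the non-integer case, and this is where reducibility enters. The monodromy of $\frakf=M\circ w^{\beta}$ around $w=0$ is $\rho_j=M R_\beta M^{-1}$ with $R_\beta=\mathrm{diag}(e^{\pi i\beta},e^{-\pi i\beta})$, and reducibility forces $\rho_j\in{\rm U}(1)$, i.e. $\rho_j$ is diagonal. For $\beta\notin\NN$ the eigenvalues $e^{\pm\pi i\beta}$ of $R_\beta$ are distinct, so conjugating the diagonal $R_\beta$ to the diagonal $\rho_j$ forces $M$ to be diagonal or anti-diagonal. If $M$ is diagonal then $\frakf=\lambda w^{\beta}$ and the rescaling $z=\lambda^{1/\beta}w$ gives $\frakf=z^{\beta}$; if $M$ is anti-diagonal then $\frakf=\lambda w^{-\beta}$, and post-composing the developing map with the element $w\mapsto -1/w$ of ${\rm PSU}(2)$ (which preserves ${\rm U}(1)$) makes it a scalar multiple of $w^{\beta}$, whence a final rescaling again yields $\frakf=z^{\beta}$. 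For $\beta\in\NN$ one has $R_\beta=\id$, so $\rho_j=\id$ imposes no constraint on $M$, and the general form $\frac{az^{\beta}+b}{cz^{\beta}+d}$ is all one can expect.

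The steps requiring the most care are the exclusion of the logarithmic solution in the resonant integer case and the normalization in the non-integer case. For the former I would lean on the fact that $f$ is a developing map with ${\rm PSU}(2)$ monodromy, so that the local monodromy cannot be parabolic. For the latter the crux is the linear-algebra fact that a matrix conjugating one diagonal matrix with distinct eigenvalues to another must permute the two eigenlines; this is exactly what pins $M$ down to (anti-)diagonal form and collapses the M\"obius factor to a scalar that a coordinate rescaling absorbs.
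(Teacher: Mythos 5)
Your proof is correct, but the comparison is lopsided: the paper's entire proof is the citation ``It follows from the proof of Theorem 1.4 in~\cite{CWWX2015}'', so what you have written is a self-contained reconstruction of the argument that citation hides. Your route --- single-valuedness of the Schwarzian $S(f)$, the Fuchsian equation $y''+\tfrac12 S(f)y=0$ with exponents $\tfrac{1\pm\beta}{2}$, Frobenius series, exclusion of the logarithmic solution when $\beta\in\NN$ because ${\rm PSU}(2)$ contains no parabolic elements, and the fixed-point/eigenvalue argument forcing the M\"obius factor $M$ to be diagonal or anti-diagonal when $\beta\notin\ZZ$ --- is exactly the standard mechanism behind the cited result, so in substance the two arguments agree; yours simply makes the lemma verifiable without consulting~\cite{CWWX2015}. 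The one step worth highlighting is your treatment of the anti-diagonal case: there $|f|\to\infty$ at the cone point, so no branch of the originally fixed $f$ can equal $z^{\beta}$ in any coordinate, and you must pass to the multiplicative developing map $-1/f$ (post-composition by an element of ${\rm PSU}(2)$ that normalizes ${\rm U}(1)$). Read literally for a fixed $f$, the lemma's conclusion should be $\frakf=z^{\pm\beta}$ --- precisely the dichotomy $\mu z^{\beta}$ or $\tilde\mu z^{-\beta}$ that the paper itself records later in Lemma~\ref{lemma:order} (the football, whose developing map $z^{\beta}$ becomes $\zeta^{-\beta}$ in the coordinate $\zeta=1/z$ at the other cone point, shows the sign cannot always be made $+$ for one fixed $f$ at every cone point). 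So your normalization step is not a gap; it is the correct reading of the statement, and it is consistent with how the paper uses the lemma, since replacing $f$ by $1/f$ only changes the sign of the eigenfunction $\phi$.
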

\begin{proof}
It follows from the proof of Theorem 1.4 in~\cite{CWWX2015}.
\end{proof}

\begin{lemma}
The function $\phi$ defined in~\eqref{e:eigen} is contained in $\calD^{\Fr}\cap \calD^{\Hol}$.
\end{lemma}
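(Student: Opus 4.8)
The plan is to reduce the claim to a purely local computation of the expansion of $\phi$ near each cone point and then compare it with the normal forms \eqref{e:extensionzF} and \eqref{e:extensionzH}. The starting observation is that $\phi=h\circ f$, where $h(w)=\frac{1-|w|^{2}}{1+|w|^{2}}$ is the height function on the round sphere $\overline{\CC}$; this $h$ is a smooth, $[-1,1]$-valued function, smooth even at $w=\infty$ (where $h=-1$). By the well-definedness lemma above, $\phi$ is single-valued and continuous on $\Sigma$, and since $f$ is locally univalent and meromorphic away from $\supp D$, $\phi$ is smooth there. As $g$ is a spherical conical metric, $\Vol_{g}(\Sigma)<\infty$, so the bounded function $\phi$ lies in $L^{2}(d\Vol_{g})=\frakr^{-1}L^{2}_{b}$, and constants together with $\calD^{\min}$ lie in $\calD^{\max}$. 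Thus the whole content is the behavior at the cone points.

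Next I would invoke the preceding normal-form lemma. Near a cone point $P_{j}$ of angle $2\pi\beta$ there is a coordinate $z$ (with $|z|\sim\frakr^{1/\beta}$) in which a representative $\frakf$ of $f$ is either $z^{\beta}$ (if $\beta\notin\NN$) or a Möbius image $\frac{az^{\beta}+b}{cz^{\beta}+d}$ (if $\beta\in\NN$). In the non-integer case one computes directly
\[
\phi=\frac{1-|z|^{2\beta}}{1+|z|^{2\beta}}=\frac{1-\frakr^{2}}{1+\frakr^{2}}=1-2\frakr^{2}+2\frakr^{4}-\cdots,
\]
a convergent power series in $\frakr^{2}$ with no $\log\frakr$ term and only radial ($k=0$) modes. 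In the integer case, writing $w=z^{\beta}$, the function $\phi=(h\circ M)(w)$ is the pullback by the holomorphic map $z\mapsto w=z^{\beta}$ of a function smooth on all of $\overline{\CC}$, hence real-analytic in $w,\bar w$ near $w=0$; expanding gives $\phi=\sum_{m,n\ge 0}c_{mn}z^{\beta m}\bar z^{\beta n}$, where $c_{00}=\phi(P_{j})$ and, because $\beta\in\NN$, each monomial $z^{\beta m}\bar z^{\beta n}=\frakr^{m+n}e^{i\beta(m-n)\theta}$ is a genuine smooth function on $\Sigma_{D}$.

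In both cases I would then write $\phi=a_{0}+\tilde u$ with $a_{0}=\phi(P_{j})$ constant, and check $\tilde u\in\calD^{\min}$. The key point is that every non-constant monomial vanishes at least to order $\frakr$: in the non-integer case the lowest is $\frakr^{2}$, and in the integer case the lowest are $z^{\beta},\bar z^{\beta}\sim\frakr e^{\pm i\beta\theta}$. Factoring out one power of $\frakr$, one sees $\tilde u=\frakr v$ with $v$ a bounded function smooth in $(\frakr,\theta)$ on $\Sigma_{D}$, so $v\in H^{2}_{b}$ and hence $\tilde u\in\frakr H^{2}_{b}\subset\calD^{\min}$ by the inclusion $\frakr H^{2}_{b}\subset\calD^{\min}$ from the proposition above. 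Since $\phi=a_{0}+\tilde u$ has constant term $a_{0}$, vanishing $\log$-coefficient $b_{0}=0$, and all remaining expansion coefficients $a_{k},b_{k}$ for $1\le|k|\le J$ equal to zero, comparison with \eqref{e:extensionzF} and \eqref{e:extensionzH} (equivalently Definitions~\ref{d:Dfr} and \ref{d:Dhol}) shows $\phi\in\calD^{\Fr}\cap\calD^{\Hol}$.

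I expect the main obstacle to be the uniform treatment of the integer case when $\frakf$ has a pole at the cone point (i.e.\ $d=0$, so $f(P_{j})=\infty$): there $\phi$ must be expanded through the chart $\zeta=1/w$ at $w=\infty$, using the smoothness of $h$ there, to confirm that $\phi$ is still real-analytic in $w,\bar w$. A secondary technical point is justifying that the full tail $\tilde u$—not merely its leading monomials—lies in $\frakr H^{2}_{b}$; this follows from the convergence of the series for $\phi$ together with the fact that the b-derivatives preserve the bounded smooth dependence on $(\frakr,e^{i\theta})$, but it is the step that requires the most care.
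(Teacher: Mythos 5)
Your proof is correct and takes essentially the same route as the paper's: localize at each cone point, invoke the preceding normal-form lemma ($\frakf=z^{\beta}$ for $\beta\notin\NN$, $\frakf=\frac{az^{\beta}+b}{cz^{\beta}+d}$ for $\beta\in\NN$), expand $\phi$, and compare with \eqref{e:extensionzF} and \eqref{e:extensionzH}. The only differences are cosmetic: where the paper writes out the integer-angle expansion explicitly in terms of $a,b,c,d$, you obtain its form ($\phi=\sum c_{ml}z^{\beta m}\bar z^{\beta l}$) from smoothness of the height function $h$ on $\overline{\CC}$ composed with a M\"obius map (which also handles the pole case $f(P_j)=\infty$ uniformly), and you make explicit the step the paper leaves implicit, namely that the remainder lies in $\calD^{\min}$ via the inclusion $\frakr H_{b}^{2}(\Sigma_{D})\subset\calD^{\min}$.
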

\begin{proof}
Near a cone point with non-integer cone angle, using the expression $f= z^{\beta}$, we get the local expression of $\phi$ as
$$
\phi=\frac{1-|z|^{2\beta}}{1+|z|^{2\beta}}
$$
and hence
$\phi$  has the following expansion
$$
\phi\sim 1-2|z|^{2\beta}+\calO(|z|^{2\beta+\epsilon}).
$$

Near a cone point with integer cone angle $2\pi n$, the developing map $f$ has a different expression $f=\frac{az^{n}+b}{cz^{n}+d}$ where $ad-bc=1$, and hence $\phi$ is given by
\begin{multline}
\phi\sim \frac{|b|^{2}-|d|^{2}}{|b|^{2}+|d|^{2}} -
\frac{ 2\bar b\bar d}{(|b|^{2}+|d|^{2})^{2}} z^{n} -\frac{ 2 b d}{(|b|^{2}+|d|^{2})^{2}}  \bar z^{n}\\
 + \frac{2\bar b \bar d (a\bar b + c\bar d)}{(|b|^{2}+|d|^{2})^{3}} z^{2n} + \frac{2 b d (\bar a b + \bar c d)}{(|b|^{2}+|d|^{2})^{3}} \bar z^{2n}+ \frac{2(|b|^{2}-|d|^{2})(|a|^{2}+|c|^{2})}{(|b|^{2}+|d|^{2})^{2}}|z|^{2n}\\
+ \calO(|z|^{2n+\epsilon}).
\end{multline}

In either case, by comparing with~\eqref{e:extensionzF} and~\eqref{e:extensionzH}, we see that $\phi$ is in $\calD^{\Fr}\cap \calD^{\Hol}$.
\end{proof}
Using the explicit expression of $\Delta_{g}$ in~\eqref{e:Deltag}, it is easy to check that $\phi$ satisfies:
$$
\Delta_{g}\phi=2\phi.
$$
This completes the proof of Theorem~\ref{t:eigen}.

\begin{ex}
\label{ex:three}
Consider the standard sphere $\bS^{2}$ with the spherical metric $g_{\rm st}=d\frakr^{2}+\sin^{2}\frakr d\theta^{2}$, which is related to the metric~\eqref{e:gst} in conformal coordinates (where the developing map is $f(z)=z, \ z=re^{iw}$) by the following relation
$$
\frakr=2\arctan r,  \ \theta= w.
$$
Then one of the 2-eigenfunctions of $g_{\rm st}$ is given by $\cos r$, which is exactly the same as $\phi$ in~\eqref{e:eigen}. The other two eigenfunctions
$\sin r \cos \theta$ and $\sin r \sin \theta$
are identified with the real and imaginary parts of
$$
\frac{2f}{1+|f|^{2}}.
$$
\end{ex}

\section{A spectral condition for reducible metrics}\label{s:sp}
In this section we prove the following:
\begin{theorem}\label{t:reducible}
If there is a real-valued function $\phi\in \calD^{\Hol}$ satisfying $\Delta_{g}\phi=2\phi$, then $g$ is a reducible metric.
\end{theorem}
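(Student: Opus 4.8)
The plan is to reverse the construction of Section~\ref{s:eigen}: from the real-valued eigenfunction $\phi$ I will manufacture a multiplicative developing map. The essential bridge is the meromorphic complex gradient $X=\phi^{,z}\partial_z$, whose meromorphicity on all of $\Sigma$ I take for granted from Theorem~\ref{t:lambda} and Proposition~\ref{p:hol}. Writing $g=e^{2u}|dz|^2$ locally, so that $\Delta_g=-4e^{-2u}\partial_z\partial_{\bar z}$ and $g^{z\bar z}=2e^{-2u}$, one has $X^z=2e^{-2u}\phi_{\bar z}$, and the vanishing of $\partial_{\bar z}X^z$ is equivalent to $\phi_{\bar z\bar z}-2u_{\bar z}\phi_{\bar z}=0$, i.e. to the vanishing of the $(2,0)$-part $\nabla_z\nabla_z\phi$ of the Hessian. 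Thus the meromorphicity input is exactly the statement that the trace-free Hessian of $\phi$ vanishes away from the cone points.

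First I would combine this with the eigenvalue equation. In real dimension two the Hessian splits into its trace-free part and its trace; the trace is fixed by $\Delta_g\phi=2\phi$, so together with $\nabla_z\nabla_z\phi=0$ one obtains the Obata-type identity $\nabla^2\phi=-\phi\,g$ on $\Sigma\setminus{\rm supp}\,D$. Differentiating and using this identity gives $\nabla\big(|\nabla\phi|_g^2+\phi^2\big)=0$, hence $|\nabla\phi|_g^2+\phi^2\equiv C$ for a constant $C$. Since $\phi$ is bounded (its leading term at each cone point is the constant $a_0$ in \eqref{e:extensionzH}) it attains a maximum, at which $\nabla\phi=0$, so $C=\max\phi^2>0$ (the case $\phi\equiv0$ being trivial); rescaling the eigenfunction I may assume $C=1$, so that $|\phi|\le 1$ and $|\nabla\phi|_g^2=1-\phi^2$.

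Next I would recover the developing map. Since $X$ is a globally meromorphic vector field, $\eta:=-dz/X^z$ is a well-defined global meromorphic $1$-form, and I set $f:=\exp\!\int\eta$, a multivalued function with $f'/f=-1/X^z$. Using $X^z=2e^{-2u}\phi_{\bar z}$ together with $|\nabla\phi|_g^2=1-\phi^2$, a direct computation gives $\mathrm{Re}\,\eta=\tfrac12\,d\log\frac{1-\phi}{1+\phi}$, so that $|f|^2=\frac{1-\phi}{1+\phi}$ after fixing the integration constant, and hence $\phi=\frac{1-|f|^2}{1+|f|^2}$ as in \eqref{e:eigen}. Feeding $|f|^2$ and $f'=-f/X^z$ back in, one checks $\frac{4|f'|^2}{(1+|f|^2)^2}=e^{2u}$, i.e. $g=f^*g_{\rm st}$; the local normal forms at the cone points coming from \eqref{e:extensionzH} identify $f$ with the branch behaviour $z^\beta$ (or its M\"obius image as in Example~\ref{ex:three}) and confirm that $f$ is locally univalent, so that $f$ is a genuine developing map of $g$.

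Finally, the monodromy. Around any loop $\gamma$ in $\Sigma\setminus{\rm supp}\,D$ the map $f$ is multiplied by $\exp\oint_\gamma\eta$, and $\mathrm{Re}\oint_\gamma\eta=\oint_\gamma \tfrac12\,d\log\frac{1-\phi}{1+\phi}=0$ because $\phi$ is single-valued on $\Sigma$; hence the multiplier lies in ${\rm U}(1)$ and the monodromy of $f$ is contained in ${\rm U}(1)$, which is exactly reducibility. The step I expect to be most delicate is the passage at the cone and branch points: one must verify that $\eta$ has only logarithmic poles with residues dictated by the $\calD^{\Hol}$-expansion, that $f=\exp\int\eta$ is locally univalent there (rather than acquiring spurious critical points), and that the imaginary periods are compatible with the angle data, so that the reconstructed $f$ is a bona fide developing map and not merely a formal solution. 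The reality and single-valuedness of $\phi$ is what forces the real periods to vanish, and this is precisely where the hypothesis that $\phi$ is real-valued (rather than a general complex $2$-eigenfunction) is used.
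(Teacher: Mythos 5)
Your overall architecture coincides with the paper's own proof: take the meromorphic gradient $X$ from Proposition~\ref{p:hol}, extract a first integral, turn the dual logarithmic one-form into a candidate developing map $f=\exp\int\omega$, and read off ${\rm U}(1)$ monodromy from exactness of $\Re\,\omega$. Your derivation of the first integral is a legitimate (and clean) variant: you get $|\nabla\phi|_g^2+\phi^2\equiv C^2$ from the Obata identity $\nabla^2\phi=-\phi\,g$, whereas the paper's Lemma~\ref{lemma:exact} obtains the equivalent identity \eqref{e:C} by solving $\phi_{z\bar z}=-(\phi^2/F)_{\bar z}$ locally, showing the holomorphic correction satisfies $g(z)F(z)=C^2>0$, and globalizing by connectedness; both routes use reality of $\phi$ in the same essential way, and your computations of $|f|^2=\frac{1-\phi}{1+\phi}$, of $f^*g_{\rm st}=g$, and of the vanishing real periods agree with Lemma~\ref{lemma:char}. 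One small repair: a general element of $\calD^{\Hol}$ is \emph{not} bounded, since the expansion \eqref{e:extensionzH} allows negative powers $z^{k}$, $k<0$; your parenthetical claim that the leading term is $a_0$ requires the observation that reality of $\phi$ kills those coefficients (the conjugate of $z^{-|k|}$ is $\bar z^{-|k|}$, which is not an admissible singular term in $\calD^{\Hol}$).

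The genuine gap is the step you flag and postpone: proving that $f$ is an actual developing map of $g$ rather than a formal one. This is exactly the content of the paper's Lemma~\ref{lemma:order}, and the paper's proof of Theorem~\ref{t:reducible} consists of nothing but an appeal to that lemma, so it is not a dispensable detail. Concretely, your $f$ is so far defined and locally univalent only on $\Sigma\setminus\big({\rm supp}\,D\cup{\rm supp}\,(\omega)\big)$, and ${\rm supp}\,(\omega)$ contains smooth points of $g$ where $X$ vanishes; at such a point one must show $\omega$ has a simple pole of residue exactly $\pm1$ (otherwise $f$ fails local univalence or acquires nontrivial local monodromy, contradicting smoothness of $g$ there), while at a cone point one must show the residue is $\pm\beta$ for non-integer angle, and that an integer-angle point $2\pi n$ is either a simple pole of residue $\pm n$ or a zero of order $n-1$, so that the cone-angle condition in the definition of a developing map is met. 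The paper establishes this not from the $\calD^{\Hol}$-expansion, as you suggest, but by a comparison argument: on a punctured disc about each such point, $f$ restricts to a developing map of the restricted metric, hence agrees up to a ${\rm PSU}(2)$ transformation with the local normal form $\frac{az^m+b}{cz^m+d}$ ($m=1,\beta,n$) furnished by \cite[Lemma 3.2]{CWWX2015} and the proof of Theorem 1.4 there; then $\omega=df/f$ is computed explicitly and the residues and zeros are read off. Without this (or a substitute), the reconstructed $f$ could a priori fail local univalence or realize the wrong divisor, so your argument is incomplete at precisely the point where reducibility is actually clinched.
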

Let $\phi \in \calD^{\Hol}$ be an eigenfunction with $\Delta_{g}\phi=\lambda\phi$. Define the complex gradient of $\phi$ as
$$
X=\phi^{,z} \partial_{z}.
$$
We will use this vector field to show $\lambda\geq 2$, and if $\lambda=2$ then $X$ is a meromorphic vector field, which will then be related to a developing map of $g$ provided
that the 2-eigenfunction $\phi$ is real-valued. Before the proof, we first give two examples.
\begin{ex}
Take the spherical football with angles $(2\pi\beta, 2\pi\beta)$ where $\beta\notin\NN$, and let $z$ be the (global) conformal coordinate centered at one of the cone points. The eigenfunction defined in~\eqref{e:eigen} is given by $\frac{1-|z|^{2\beta}}{1+|z|^{2\beta}}$, and its corresponding gradient vector field is given by
$$X=-z\partial_{z}.$$
\end{ex}

\begin{ex}
Take the three 2-eigenfunctions on $\bS^{2}$ described in Example~\ref{ex:three}, then the three corresponding vector fields are holomorphic on the sphere:
\begin{equation}
X_{1}=-\frac{1}{2}z\partial_{z}, \ X_{2}=\frac{1}{4} (1-z^{2})\partial_{z}, \ X_{3}=\frac{1}{4}i (1+z^{2})\partial_{z}.
\end{equation}
Consider the double cover of a sphere using the pullback by $f:z\rightarrow z^{2}$, which gives a spherical conical metric with two antipodal cone points each with angle $4\pi$. The three eigenfunctions from the sphere lift to the double cover, and the three meromorphic vector fields are given by
\begin{equation}
X_{1}=-\frac{1}{4}z\partial_{z}, \ X_{2}=(\frac{1}{8}z^{-1}-\frac{1}{8}z^{3})\partial_{z}, \ X_{3}=i(\frac{1}{8}z^{-1}+\frac{1}{8}z^{3})\partial_{z}.
\end{equation}
\end{ex}

\begin{remark}
Each of the meromorphic vector fields above can be viewed as a generator of an infinitesimal diffeomorphism on the twice-punctured sphere. In particular, $X_{1}$ is
associated to the conformal dilations on the sphere, and generates a diffeomorphic family of spherical conical metrics with the same conical data.
Such conformal dilations can be seen as gauge actions, which shows that the presence of 2-eigenfunctions creates obstructions in solving the nonlinear Liouville equation~\eqref{e:liou} that was studied in detail in~\cite{MaZh19}.
\end{remark}

\subsection{A meromorphic vector field}\label{ss:merovec}
We will prove the following theorem:
\begin{theorem}\label{t:lambda}
For any spherical conical metric $g$, the first nonzero eigenvalue of $\Delta_{g}^{\Hol}$ satisfies $\lambda_{1}\geq 2$.
\end{theorem}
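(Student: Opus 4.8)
The plan is to run a Bochner--Weitzenb\"ock argument in the guise of the complex gradient field $X=\phi^{,z}\partial_{z}$, with the whole difficulty concentrated in a boundary analysis at the cone points that is tailored to the holomorphic extension. Let $\phi\in\calD^{\Hol}$ satisfy $\Delta_{g}\phi=\lambda\phi$ with $\lambda>0$. Writing $g=\rho|dz|^{2}$, the metric is K\"ahler, so the mixed Christoffel symbols vanish and $\nabla_{\bar z}X^{z}=g^{z\bar z}\nabla_{\bar z}\nabla_{\bar z}\phi$ is exactly the trace-free (that is, $(0,2)$) part of the Hessian, measuring the failure of $X$ to be holomorphic. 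At the same time, the eigenvalue equation together with~\eqref{e:Deltag} gives $\operatorname{div}X=g^{z\bar z}\partial_{z}\partial_{\bar z}\phi=-\tfrac{\lambda}{2}\phi$. First I would record the pointwise Weitzenb\"ock identity on $\Sigma\setminus\supp D$ expressing $|\nabla_{\bar z}X^{z}|_{g}^{2}$ as an exact divergence plus a bulk term; commuting the two covariant derivatives brings in the Gaussian curvature $K\equiv1$, and combined with $\operatorname{div}X=-\tfrac{\lambda}{2}\phi$ the bulk term becomes a positive multiple of $\lambda(\lambda-2)|\phi|^{2}$. This is precisely the complex form of the Lichnerowicz--Obata computation, with the value $2$ arising from the real dimension $2$ and curvature $1$.

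Next I would integrate this identity over the truncated surface $\Sigma_{\delta}:=\Sigma\setminus\bigcup_{j}B_{\delta}(P_{j})$ and apply Stokes, turning the divergence into a sum of boundary integrals over the circles $\partial B_{\delta}(P_{j})$. This produces
\[
\int_{\Sigma_{\delta}}|\nabla_{\bar z}X^{z}|_{g}^{2}\,d\Vol_{g}=c\,\lambda(\lambda-2)\int_{\Sigma_{\delta}}|\phi|^{2}\,d\Vol_{g}+\sum_{j}R_{j}(\delta),
\]
for a fixed constant $c>0$, where $R_{j}(\delta)$ is the flux through the small circle around $P_{j}$.

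The heart of the argument, and the step I expect to be the main obstacle, is showing $R_{j}(\delta)\to0$ as $\delta\to0$. Here the definition of $\calD^{\Hol}$ is decisive: by~\eqref{e:extensionzH} the singular part of $\phi$ near $P_{j}$ is \emph{purely holomorphic}, $\phi=a_{0}+\sum_{1\le|k|\le J}a_{k}z^{k}+\calO(|z|^{J+1})$, so it is annihilated by $\partial_{\bar z}$. Consequently $\partial_{\bar z}\phi$, and hence the non-holomorphicity tensor $\nabla_{\bar z}X^{z}$, is governed entirely by the $\calO(|z|^{J+1})$ remainder; weighed against the conic volume $\rho\sim|z|^{2\beta-2}$ and the circumference $\sim|z|^{\beta}$ of $\partial B_{\delta}(P_{j})$, this is exactly what is needed to force $R_{j}(\delta)\to0$ and to secure integrability of $|\nabla_{\bar z}X^{z}|_{g}^{2}$ on $\Sigma$. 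Making this decay estimate rigorous, uniformly in the integer and non-integer cases for $\beta_{j}$ via the expansions recalled in Section~\ref{s:sa}, is the delicate point; the whole reason for working with the holomorphic extension rather than, say, the Friedrichs extension is that the offending singular terms carry no $\bar z$ and thus contribute nothing to the boundary flux.

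Finally, passing to the limit yields $\int_{\Sigma}|\nabla_{\bar z}X^{z}|_{g}^{2}\,d\Vol_{g}=c\,\lambda(\lambda-2)\int_{\Sigma}|\phi|^{2}\,d\Vol_{g}$. The left-hand side is nonnegative, and since $\lambda>0$ and $\phi\not\equiv0$ this forces $\lambda\ge2$, which is the assertion of Theorem~\ref{t:lambda}. I would close by noting the equality case, which feeds directly into the next results: when $\lambda=2$ the left-hand side vanishes, so $\nabla_{\bar z}X^{z}\equiv0$, i.e.\ $X^{z}$ is holomorphic on $\Sigma\setminus\supp D$, and the growth bound built into $\calD^{\Hol}$ upgrades this to a meromorphic vector field on $\Sigma$.
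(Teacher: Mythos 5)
Your overall strategy is the paper's own: form the complex gradient $X=\phi^{,z}\partial_z$, run the Bochner/Lichnerowicz computation to exhibit a nonnegative quantity equal to a multiple of $(\lambda-2)$ times a positive integral, and concentrate all the real work at the cone points, where membership in $\calD^{\Hol}$ is what makes the argument close; your equality-case remark likewise matches Proposition~\ref{p:hol}. The gap is in the plan for the decisive step, the vanishing of the boundary contributions, and it occurs in two related places. First, you treat \eqref{e:extensionzH} as if the $\calO(|z|^{J+1})$ remainder could be differentiated twice with a corresponding gain of decay. The domain expansion only places the remainder in $\calD^{\min}$, a weighted Sobolev statement, which by itself gives no pointwise control of $\nabla^{(0,1)}X$. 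The paper instead uses the eigenfunction equation to solve the regular--singular ODE mode by mode (see \eqref{e:phik} and what follows), and only those expansions can be differentiated term by term. Moreover the outcome is not what naive bookkeeping suggests: the Fourier modes $|k|\geq J+1$ of $\phi$ contribute terms to $\nabla^{(0,1)}X$ of size $r^{|k|-2\beta}$ (see \eqref{e:partial}), which for $|k|=J+1$ and $\beta>1$ \emph{blow up}; square-integrability holds only because of the strict inequality $J+1>\beta$, a borderline near-miss that an ``$\calO(|z|^{J+1})$ remainder weighed against the conic volume'' count does not detect.

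Second, and more seriously, the divergence identity you choose---with bulk term $c\,\lambda(\lambda-2)|\phi|^2$ rather than the paper's $\tfrac12(\lambda-2)|X|^2$---has a flux to which your key mechanism does not apply. Converting $|\nabla\phi|^2$-type terms into $\lambda|\phi|^2$ plus a divergence puts terms of the form $\bar\phi\,\partial_\nu\phi$ (and pairings of $\overline{X}$ against derivatives of $X$) into $R_j(\delta)$, and in these the singular part of $\phi$ enters through $\phi$ itself and through $\partial_z\phi$, which are \emph{not} killed by $\partial_{\bar z}$. Note that the relevant eigenfunctions here are complex-valued: a real eigenfunction in $\calD^{\Hol}$ has no singular coefficients at all (it lies in $\calD^{\Fr}$), so precisely in the case that gives the theorem content the singular terms are present. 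Individually these flux pieces diverge; for instance a singular mode $z^{-m}$ makes $\bar\phi\,\partial_\nu\phi$ contribute on the order of $\delta^{-2m}$ to the flux, and the $\nabla|\nabla\phi|^2$ piece is worse. So one would have to prove cancellations among divergent terms, which again requires the full mode analysis. This is exactly why the paper pairs $\nabla^{*}\nabla^{(0,1)}X$ against $X$ in \eqref{e:ibp}: every factor appearing in that flux is built from $\partial_{\bar z}\phi$ and its derivatives, which is where ``the singular part is holomorphic'' genuinely bites. Your final identity is in fact true---it follows from \eqref{e:ibp} and \eqref{e:Bochner} together with one further integration by parts relating $\int|X|^2$ to $\lambda\int|\phi|^2$, whose flux pairs the singular part of $\bar\phi$ against the small factor $\partial_{\bar z}\phi$ and vanishes because $\beta>J$---but the mechanism you propose does not by itself establish it.
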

If the equality holds, then we have the following proposition for any 2-eigenfunction in the holomorphic extension.
\begin{proposition}\label{p:hol}	
Let $\phi\in \calD^{\Hol}$ be an eigenfunction (not necessarily real-valued) satisfying $\Delta_{g}\phi=2\phi$, then its complex gradient vector field defined as $X=\phi^{,z}\partial_{z}$ is meromorphic on $\Sigma$. Moreover, $X$ has the following properties:
\begin{enumerate}
\item At any point $p\notin {\rm supp}\, D$, $X$ is holomorphic;
\item At a point $p \in {\rm supp}\, D$ with $\beta\notin\NN$ and $\beta<1$, $X$ has a zero;
\item At a point $p \in {\rm supp}\, D$ with $\beta>1$,
$X$ can have a pole of order at most $[\beta]-1$.
\end{enumerate}
\end{proposition}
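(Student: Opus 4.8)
The plan is to establish the meromorphicity of $X=\phi^{,z}\partial_z$ by a Bochner-type computation, exploiting the eigenvalue equation $\Delta_g\phi=2\phi$ together with the fact that $g$ has constant curvature $1$. The key observation is that the coefficient function $\phi^{,z}=g^{z\bar z}\partial_{\bar z}\phi$ should satisfy $\partial_{\bar z}(\phi^{,z})=0$ away from the cone points precisely when $\lambda=2$, so that $X$ is holomorphic on $\Sigma\setminus\supp D$ (giving property~(1)). First I would write everything in the conformal coordinate $z$ where $g=e^{2u}|dz|^2$, so that $\Delta_g=-e^{-2u}\partial_z\partial_{\bar z}$ and $g^{z\bar z}=\tfrac12 e^{-2u}$, and the constant-curvature condition becomes the Liouville equation $2\partial_z\partial_{\bar z}u=e^{2u}$. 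Then I would compute $\partial_{\bar z}(\phi^{,z})$ directly: writing $\phi^{,z}=2e^{-2u}\partial_{\bar z}\phi$, differentiating in $\bar z$ produces a term with $\partial_{\bar z}^2\phi$ and a term with $\partial_{\bar z}u\,\partial_{\bar z}\phi$, and I expect the eigenvalue equation $-e^{-2u}\partial_z\partial_{\bar z}\phi=2\phi$ combined with the Liouville equation to force this expression to vanish (this is the exact point where $\lambda=2$ enters, matching the role of the curvature lower bound in the Obata/Lichnerowicz argument cited as~\cite{Li,Ob}).

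Once $X$ is shown to be holomorphic on $\Sigma\setminus\supp D$, the remaining task is to control its behavior at each cone point, which is a purely local analysis using the membership $\phi\in\calD^{\Hol}$ and the local expansions from Section~\ref{s:sa}. Near a cone point of angle $2\pi\beta$, I would substitute the holomorphic-extension expansion~\eqref{e:extensionzH}, namely $\phi=a_0+\sum_{1\leq|k|\leq J}a_k z^k+\calO(|z|^{J+1})$, together with the local form of the metric. The vector field is $X=\phi^{,z}\partial_z=2e^{-2u}(\partial_{\bar z}\phi)\partial_z$, and near the cone point $e^{-2u}$ behaves like $|z|^{2(1-\beta)}$ up to a bounded nonvanishing factor (since $u\sim(\beta-1)\log|z|$). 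The factor $\partial_{\bar z}\phi$ is governed by the conjugate-holomorphic part of the expansion; because the holomorphic extension allows negative Fourier modes $z^k$ with $k<0$ (i.e.\ terms $b_k z^k$ in~\eqref{e:extensionzH}, which in the real variable $\frakr$ are the $\frakr^{-|k|/\beta}$ terms), I must carefully track which terms survive under $\partial_{\bar z}$ and how they combine with the $|z|^{2(1-\beta)}$ prefactor.

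For property~(2), when $\beta<1$ and $\beta\notin\NN$ one has $J=0$, so the expansion reduces to $\phi=a_0+\calO(|z|^{2\beta})$ with only the constant and the leading $|z|^{2\beta}$ term present; applying $\partial_{\bar z}$ and multiplying by $e^{-2u}\sim|z|^{2(1-\beta)}$ I expect the leading power of $z$ in $X$ to be strictly positive, giving a genuine zero. For property~(3), when $\beta>1$ I would determine the most singular admissible term: the holomorphic-extension coefficients $a_k z^k$ range up to $|k|=J=[\beta]$, and the worst pole arises from the interplay of the highest-order conjugate term with the prefactor; a bookkeeping of exponents should yield that the pole order of $X$ is at most $[\beta]-1$. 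The main obstacle I anticipate is precisely this exponent bookkeeping at integer versus non-integer cone angles: at integer $\beta$ the developing map involves a M\"obius factor (as in the second lemma of Section~\ref{s:eigen}) and the cutoff $J=\beta-1$ differs, so I would need to verify the bound separately there, and I must ensure that the distributional $\partial_{\bar z}$ derivative of the singular terms produces no spurious delta-type contributions at $z=0$ that would spoil the holomorphy established in the interior. Handling these boundary contributions rigorously---rather than merely formally matching leading powers---is where the real work lies.
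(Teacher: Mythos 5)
There is a genuine gap at the heart of your proposal, and it cannot be repaired by a pointwise computation. From the eigenvalue equation $\phi_{z\bar z}=-\tfrac{\lambda}{4}e^{2u}\phi$ and the curvature-one Liouville equation $u_{z\bar z}=-\tfrac14 e^{2u}$, what you can actually derive for the quantity $T:=\nabla_{\bar z}\nabla_{\bar z}\phi=\phi_{\bar z\bar z}-2u_{\bar z}\phi_{\bar z}$ --- which, up to the nonvanishing factor $g^{z\bar z}$, is exactly the coefficient $\partial_{\bar z}(\phi^{,z})$ of $\nabla^{(0,1)}X$ --- is
\begin{equation*}
\partial_z T=\frac{2-\lambda}{4}\,e^{2u}\,\phi_{\bar z},
\end{equation*}
so that for $\lambda=2$ you only learn that $T$ is \emph{antiholomorphic} (equivalently, that the traceless Hessian of $\phi$ is divergence-free, which is precisely the pointwise Bochner identity~\eqref{e:Bochner} in the form $\nabla^{*}\nabla^{(0,1)}X=0$); you do \emph{not} learn $T=0$. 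No local argument can give $T=0$: on a small disc inside the round sphere the equation $\Delta_g\phi=2\phi$ has an infinite-dimensional space of local solutions (solve a Dirichlet problem), while the solutions with $T\equiv 0$, i.e.\ $\nabla^2\phi=-\phi g$, form only the $3$-dimensional family of restrictions of linear functions of $\RR^3$. The vanishing of $\nabla^{(0,1)}X$ is a \emph{global} fact: in the paper it follows from integrating \eqref{e:Bochner} against $X$ over the closed surface, $\int_\Sigma|\nabla^{(0,1)}X|^2=\tfrac12(\lambda-2)\int_\Sigma|X|^2=0$, and the entire technical difficulty is justifying this integration by parts~\eqref{e:ibp} at the cone points, i.e.\ proving $\nabla^{(0,1)}X\in L^2$ near each $P_j$. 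That is what the paper's Fourier-mode expansions $\phi_k(r)$, the coefficients $C_{k,j}$, $\tilde C_{k,j}$, and the separate integer/non-integer angle cases accomplish; your proposal contains no substitute for this step.

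The same gap undermines your exponent bookkeeping for items (2) and (3). Membership in $\calD^{\Hol}$ only constrains the modes $|k|\leq J$; the modes $k<-J$ live in the remainder $\tilde u\in\calD^{\min}$, begin with $r^{|k|}$, and contribute to $X$ terms of order $r^{|k|+1-2\beta}$ (see~\eqref{e:Xkbig}). For $k=-(J+1)$ and $\beta\in\bigl(J+\tfrac12,\,J+1\bigr)$ this exponent lies in $(-J,\,1-J)$: it is negative, non-integer, and strictly more singular than the claimed worst case $r^{-([\beta]-1)}$. So tracking exponents from the $\calD^{\Hol}$ expansion alone can certify neither meromorphy (non-integer powers survive) nor the bound $[\beta]-1$. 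In the paper these dangerous coefficients are killed precisely because they appear with nonzero factors in $\nabla^{(0,1)}X$ (see~\eqref{e:partial}), so the global conclusion $\nabla^{(0,1)}X\equiv 0$ forces them to vanish; without that conclusion your local analysis cannot close. Your instinct about the Bochner/Lichnerowicz--Obata analogy, the local setup, and the integer-angle caveat are all on target, and your worry about distributional contributions at $z=0$ is legitimate but secondary --- the missing ingredient is the globally justified integration by parts, which is where the paper's proof does its real work.
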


We start with an arbitrary eigenfunction $\phi \in \calD^{\Hol}$ satisfying
$$
\Delta_{g}\phi=\lambda \phi, \ \lambda> 0.
$$
Locally the metric is given by $g=e^{2u}|dz|^{2}$ as in~\eqref{e:gst}, then we can write the gradient vector field $X$ of $\phi$ as
$$
X=e^{-2u}\frac{\partial \phi}{\partial\bar z}\partial_{z}=\frac{4|f'|^2}{(1+|f|^2)^2}\,\frac{\partial \phi}{\partial\bar z}\partial_{z}.
$$
Away from cone points,  by elliptic regularity it is easy to see that $\phi$ is smooth. Hence $X$ is a smooth vector field on $\Sigma \setminus {\rm supp}\,D$.

Using Bochner's identity (see for example~\cite[1.38]{Ballmann}), we have the following pointwise identity for $X$:
\begin{equation}
\nabla^{*}\nabla X=\lambda X- \Ric X
\end{equation}
where $\nabla^{*}$ is the formal adjoint operator of $\nabla$. Recall for the spherical metric $g$ considered here, $\Ric X=X$. 
Hence by~\cite[(4.80)]{Ballmann},
\begin{equation}\label{e:Bochner}
\nabla^{*}\nabla^{(0,1)}X=\frac{1}{2}(\nabla^{*}\nabla X-\Ric X)=\frac{1}{2}(\lambda-2)X.
\end{equation}

We now show that the following integration by parts is valid:
\begin{equation}\label{e:ibp}
\int_{\Sigma} |\nabla^{(0,1)}X|^{2}=\int_{\Sigma} (\nabla^{*}\nabla^{(0,1)}X, X).
\end{equation}
Note that all we need to show is that the integral of the left hand side converges, in other words $\nabla^{(0,1)}X$ is in $L^{2}$. Then using $\nabla X=\nabla^{(1,0)}X + \nabla^{(0,1)}X$ and pointwise $(\nabla^{(1,0)}X, \nabla^{(0,1)}X)=0$ one can apply integration by parts to get the expression on the right hand side. If \eqref{e:ibp} holds, then from~\eqref{e:Bochner} we immediately get $\lambda\geq 2$, and $\lambda=2$ if and only if $\nabla^{(0,1)}X=0$.

 To check that $\nabla^{(0,1)}X$ is indeed square integrable, we compute the decay rate of $\nabla^{(0,1)}X$ near each cone point.
To do this we take $z=re^{i\theta}$ to be the complex coordinate near a cone point of angle $2\pi \beta$, and decompose the eigenfunction $\phi$ locally into Fourier series  $\phi=\sum_{k\in \ZZ}\phi_{k}(r) e^{ik\theta}$. We compute the functions $\phi_{k}(r)$ and then use them to express $\nabla^{(0,1)}X$. If all $\phi_{k}$ decay fast enough near the cone points, then $\nabla^{(0,1)}X$ will be square integrable. There is a slight difference between the two cases $\beta\notin \NN$ and $\beta\in \NN$, so we carry out the computation in the first case in details and then point out the difference in the second case.

\subsubsection{Developing maps near a cone point of $\beta \notin \NN$}
Near such a point the developing map $f$ can be written as
$$
f=z^{\beta},
$$
so the metric is given by
$$
g=\frac{4\beta^{2}|z|^{2(\beta-1)}}{(1+|z|^{2\beta})^{2}}|dz|^{2}.
$$
Recall $z=re^{i\theta}$, then the equation $\Delta_{g}\phi=\lambda\phi$ is given by
$$
-\frac{(1+r^{2\beta})^{2}}{4 \beta^{2}r^{2(\beta-1)}}\frac{1}{r^{2}}\big((r\partial_{r})^{2}+\partial_{\theta}^{2}\big)\phi = \lambda\phi.
$$
Write $\phi=\sum_{k=-\infty}^{\infty} \phi_{k}(r)e^{ik\theta}$, the Fourier decomposition of the equation gives a sequence of regular-singular ODEs:
$$
-\frac{(1+r^{2\beta})^{2}}{4\beta^{2}r^{2(\beta-1)}}\frac{1}{r^{2}}\big((r\partial_{r})^{2}-k^{2}\big)\phi_{k}=\lambda\phi_{k}, \ k\in \ZZ.
$$
Each ODE has two linearly independent solutions, with leading term $r^{k}$ and $r^{-k}$ respectively. By choosing $\phi\in \calD^{\Hol}$, we require that for $-J\leq k\leq J$,
each $\phi_{k}$ has an expansion with leading term $r^{k}$. Putting the leading term $r^{k}$ into the equation we see that in order to match the right hand side the next term should be given by $r^{k+2\beta}$. This gives an $r^{k+2\beta}$ term to match on the right hand side, hence the next term in the expansion is given by $r^{k+4\beta}$. Iteratively
we get
\begin{equation}\label{e:phik}
\phi_{k}(r)=C_{k,0}r^{k}+C_{k,1}r^{k+2\beta}+C_{k,2}r^{k+4\beta}+\dots+C_{k,j}r^{k+2j\beta}+\dots,
\end{equation}
where $C_{k,i}$ are determined iteratively. In particular, if $\lambda=2$, then there is the following iteration:
\begin{multline}
-8\beta^{2}C_{k,j-1}=\big((k+2j\beta)^{2}-k^{2}\big)C_{k,j}\\+2\big((k+2(j-1)\beta)^{2}-k^{2}\big)C_{k,j-1}+ \big((k+2(j-2)\beta)^{2}-k^{2}\big)C_{k,j-2}.
\end{multline}
(When $j=1$ the $C_{k,j-2}$ term is removed.)
It is straightforward to check that if $\lambda=2$ then $C_{k,j}$ are actually given by
\begin{equation}\label{e:Ckj}
C_{k,j}=(-1)^{j}\frac{2\beta}{k+\beta}C_{k,0},\ j=1, 2, \dots.
\end{equation}

On the other hand, for $|k|>J$, the assumption that $\phi$ is in $L^{2}(\Sigma, d\Vol_{g})$ requires the solution $\phi_{k}$ to start with $r^{|k|}$:
\begin{equation}
\phi_{k}(r)=C_{k,0}r^{|k|}+C_{k,1}r^{|k|+2\beta}+C_{k,2}r^{|k|+4\beta}+\dots+C_{k,j}r^{|k|+2j\beta}+\dots.
\end{equation}
where the coefficients $C_{k,j}, j>0$ can be determined again by $C_{k,0}$ by the same fashion. In particular when $\lambda=2$, we have a similar expression as~\eqref{e:Ckj}, except $k$ is replaced by $|k|$.

Now we compute $X$ and $\nabla^{(0,1)}X$ using this local expansion.

\noindent\textbf{The vector field $X$}

Compute the complex gradient of $\phi$ as
$$X=\bigg(e^{-2u}\frac{\partial \phi}{\partial\bar z}\bigg)\partial_{z}=\Bigg(\sum_{k\in \ZZ}X_{k}(r)e^{i(k+1)\theta}\Bigg)\partial_{z}.$$
Here we use the local expression of $g$ and
$
\partial_{\bar z}=\frac{e^{i\theta}}{2}\big(\partial_{r}-\frac{1}{ir}\partial_{\theta}\big)
$
to get
\begin{equation}
X_{k}=e^{-i(k+1)\theta}\frac{(1+r^{2\beta})^{2}}{4\beta^{2}r^{2(\beta-1)}}\frac{e^{i\theta}}{2}(\partial_{r}-\frac{1}{ir}\partial_{\theta}) (\phi_{k}(r)e^{ik\theta}).
\end{equation}
It simplifies to the following:
$$
X_{k}= \frac{(1+r^{2\beta})^{2}}{8\beta^{2}r^{2(\beta-1)}} \bigg(\phi_{k}' -\frac{k}{r}\phi_{k}\bigg).
$$
Recall when $-J\leq k\leq J$, $\phi_{k}$ is given by~\eqref{e:phik}. The first term in $\phi_{k}$ is $C_{k,0}r^{k}$, which is eliminated by the operator $\partial_{r} - k/r$. Therefore
$$
\phi_{k}' -\frac{k}{r}\phi_{k}=\sum_{j\geq 1} 2j\beta C_{k,j}r^{k+2j\beta-1}.
$$
This gives
\begin{equation}\label{e:Xk}
X_{k}= \frac{(1+r^{2\beta})^{2}}{8\beta^{2}}\sum_{j\geq 1} 2j\beta C_{k,j}r^{k+(2j-2)\beta+1}=\frac{1}{8\beta^{2}}\sum_{j\geq 1}\tilde C_{k,j} r^{k+(2j-2)\beta+1},
\end{equation}
where $\tilde C_{k,j}=2j\beta C_{k,j}+4(j-1)\beta C_{k,j-1}+ 2(j-2)\beta C_{k,j-2}$ (for $j=1$ the last term is removed). The same computation above applies to $k>J$ as well.

On the other hand, when $k<-J$ the first term in $\phi_{k}$ is not eliminated and we get
$$
\phi_{k}' -\frac{k}{r}\phi_{k}=\sum_{j\geq 0} (-2k+2j\beta) C_{k,j}r^{|k|+2j\beta-1},
$$
which gives
\begin{equation}\label{e:Xkbig}
X_{k}= \frac{(1+r^{2\beta})^{2}}{8\beta^{2}}\sum_{j\geq 0} (-2k+2j\beta) C_{k,j}r^{|k|+(2j-2)\beta+1}=\frac{1}{8\beta^{2}}\sum_{j\geq 0} \tilde C_{k,j}r^{|k|+(2j-2)\beta+1},
\end{equation}
where $\tilde C_{k,j}$ satisfies $$\tilde C_{k,j}=(-2k+2j\beta) C_{k,j}+2\big(-2k+2(j-1)\beta\big) C_{k,j-1}+ \big(-2k+2(j-2)\beta\big) C_{k,j-2}.$$
Again for $j=1$ the last term is removed.

Here we give the following observation of $\tilde C_{k,j}$, which will be used later.
\begin{lemma}\label{l:Ckj}
If $\lambda=2$, then the coefficients $\tilde C_{k,j}$ in~\eqref{e:Xk} and~\eqref{e:Xkbig} satisfy
\begin{equation}
\begin{split}
\tilde C_{k,j}=0, \ \forall j\geq 2.
\end{split}
\end{equation}
\end{lemma}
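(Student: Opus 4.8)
The plan is to prove Lemma~\ref{l:Ckj} by substituting the closed form \eqref{e:Ckj} directly into the definitions of $\tilde C_{k,j}$ and recognizing the result as a second difference of an affine sequence. The key structural observation is that both coefficient formulas---the one in \eqref{e:Xk} and the one in \eqref{e:Xkbig}---have the common shape
\[
\tilde C_{k,j}=v_j\,C_{k,j}+2v_{j-1}\,C_{k,j-1}+v_{j-2}\,C_{k,j-2},
\]
where $v_m$ is affine in $m$ (namely $v_m=2m\beta$ for \eqref{e:Xk} and $v_m=-2k+2m\beta$ for \eqref{e:Xkbig}), while by \eqref{e:Ckj} and its $|k|$-analogue the coefficients satisfy $C_{k,m}=(-1)^m\kappa$ for all $m\ge 1$, with $\kappa=\tfrac{2\beta}{k+\beta}C_{k,0}$ (resp.\ $\tfrac{2\beta}{|k|+\beta}C_{k,0}$). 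Thus for $m\ge1$ they have constant modulus and alternating sign, and the seed $C_{k,0}$ is the only index not forced into this pattern.

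For $j\ge 3$ all three indices $j,j-1,j-2$ are $\ge1$, so inserting $C_{k,m}=(-1)^m\kappa$ and factoring gives $\tilde C_{k,j}=(-1)^j\kappa\,(v_j-2v_{j-1}+v_{j-2})$; since $v_m$ is affine, this second difference vanishes identically and $\tilde C_{k,j}=0$. This single mechanism disposes of all $j\ge 3$ in both regimes at once. A cross-check I would include, which also feeds cleanly into Proposition~\ref{p:hol}, is the generating-function form: summing the geometric series \eqref{e:phik}--\eqref{e:Ckj} gives the closed form $\phi_k=\tfrac{C_{k,0}}{k+\beta}\cdot\tfrac{(k+\beta)r^{k}+(k-\beta)r^{k+2\beta}}{1+r^{2\beta}}$, whereupon $\phi_k'-\tfrac{k}{r}\phi_k$ collapses to a single monomial and $X_k$ becomes proportional to $r^{k+1}$ for $k\ge -J$, exhibiting simultaneously the vanishing $\tilde C_{k,j}=0$ $(j\ge2)$ and the holomorphic monomial $X_k e^{i(k+1)\theta}\propto z^{k+1}$.

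The only case not settled by the formal cancellation is $j=2$, where $C_{k,j-2}=C_{k,0}$ is the exceptional seed, and this is where I expect the real work to lie. In \eqref{e:Xk} the seed is harmless because its weight is $v_0=0$, so it drops out and $\tilde C_{k,2}=4\beta(C_{k,2}+C_{k,1})=0$, giving the stated vanishing for all $j\ge2$. In \eqref{e:Xkbig}, however, the seed enters with weight $v_0=-2k=2|k|\neq 0$, so the second-difference argument only reaches $j\ge3$; evaluating directly one finds $\tilde C_{k,2}=\tfrac{2|k|(|k|-\beta)}{|k|+\beta}\,C_{k,0}$, which is not annihilated by the recursion alone. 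Hence the main obstacle is precisely the high-frequency $(k<-J)$ case at $j=2$: to reach the full conclusion one must supply global input beyond the local ODE, namely that a genuine $2$-eigenfunction carries no content in these non-holomorphic modes. I would obtain this from the Bochner identity \eqref{e:Bochner} together with the integration by parts \eqref{e:ibp}, which force $\nabla^{(0,1)}X=0$ and hence $C_{k,0}=0$ for every $k<-J$; the statement that survives unconditionally from the recursion is then $\tilde C_{k,j}=0$ for $j\ge2$ when $k\ge -J$ and for $j\ge 3$ when $k<-J$, with the remaining boundary term controlled by the global holomorphicity rather than by the local iteration.
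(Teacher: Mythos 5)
Your computation is correct, and on the modes covered by \eqref{e:Xk} (i.e.\ $k\geq -J$) it is exactly the paper's intended argument: substitute \eqref{e:Ckj}, use the alternating-sign pattern $C_{k,m}=(-1)^m\kappa$ for $m\geq 1$, and note that the weights are affine in the index (with the seed $C_{k,0}$ carrying weight $2(j-2)\beta=0$ at $j=2$), so everything collapses to a vanishing second difference. Where you go beyond the paper is the regime $k<-J$: the paper's entire proof of Lemma~\ref{l:Ckj} is the sentence that the claim ``follows directly by substituting \eqref{e:Ckj},'' but as you observe, in \eqref{e:Xkbig} the seed enters with weight $-2k=2|k|\neq 0$, and the substitution leaves $\tilde C_{k,2}=\frac{2|k|(|k|-\beta)}{|k|+\beta}\,C_{k,0}$, which is nonzero whenever $C_{k,0}\neq 0$ (and $|k|\geq [\beta]+1>\beta$, so the prefactor cannot vanish). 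I checked this computation and it is right: read as a formal statement about arbitrary local solutions of the mode ODEs, the $j=2$, $k<-J$ case of the lemma is false, so the paper's one-line proof is genuinely incomplete there. Your repair --- justify \eqref{e:ibp} by the exponent/decay analysis alone, invoke \eqref{e:Bochner} to conclude $\nabla^{(0,1)}X=0$ when $\lambda=2$, and then read off from \eqref{e:partial} that $\tilde C_{k,0}=2|k|C_{k,0}$ must vanish, hence $\phi_k\equiv 0$ and all $\tilde C_{k,j}=0$ for $k<-J$ --- is valid and, importantly, not circular: the integration-by-parts justification in the paper uses only the exponent bounds and never invokes Lemma~\ref{l:Ckj}. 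This is in fact how the paper itself handles these modes in the proof of Proposition~\ref{p:hol}, where the vanishing of the $k<-J$ coefficients is deduced from $\nabla^{(0,1)}X=0$ rather than from the lemma; consistently, \eqref{e:dXk} applies the lemma only to $k\geq -J$, which is the part your formal cancellation fully covers. Your closed-form cross-check $\phi_k=\frac{C_{k,0}}{k+\beta}\cdot\frac{(k+\beta)r^{k}+(k-\beta)r^{k+2\beta}}{1+r^{2\beta}}$, which makes $X_k$ a single monomial proportional to $r^{k+1}$, is also correct and confirms the $k\geq-J$ case independently. In short: your proof is correct, it subsumes the paper's substitution argument where that argument works, and it identifies and fixes a real gap in the paper's stated proof of the $k<-J$ half of the lemma.
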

\begin{proof}
It follows directly by substituting~\eqref{e:Ckj} into the expression of $\tilde C_{k,j}$.
\end{proof}

\noindent\textbf{Expression of $\nabla^{(0,1)}X$}

Now we compute $\nabla^{(0,1)}X$ which is a (1,1)-tensor, locally given by
$$\sum_{k\in \ZZ} \partial_{\bar z}\big(X_{k}e^{i(k+1)\theta}\big) \partial_{z}\otimes d\bar z.$$
Using the expression of $X_{k}$ obtained above,
we compute $\partial_{\bar z}(X_{k}e^{i(k+1)\theta})=\frac{e^{i\theta}}{2} (\partial_{r}-\frac{1}{ir}\partial_{\theta}) (X_{k}(r)e^{i(k+1)\theta})$ for each $k$.

When $-J\leq k\leq J$,  the first term in~\eqref{e:Xk} is given by $\frac{1}{8\beta^{2}}\tilde C_{k,1}r^{k+1}$ and it is again eliminated by the operator $\partial_{r} - (k+1)/r$.
So we have
\begin{equation}\label{e:dXkgeneral}
\partial_{\bar z}\big(X_{k}e^{i(k+1)\theta}\big)=\frac{e^{i(k+2)\theta}}{16\beta^{2}}\sum_{j\geq 2} (2j-2)\beta\tilde  C_{k,j} r^{k+(2j-2)\beta},
\end{equation}
Since $J= [\beta]$ and $\beta$ is not integer, we have $-\beta<J$. Therefore we have a leading term $r^{2\beta+k}$ where $2\beta+k\geq \beta>0$. Moreover, if $\lambda=2$,
then by Lemma~\ref{l:Ckj}, we simply have
\begin{equation}\label{e:dXk}
\partial_{\bar z}\big(X_{k}e^{i(k+1)\theta}\big) =\frac{e^{i(k+2)\theta}}{16\beta^{2}}\sum_{j\ge 2}\tilde  C_{k,j} (2j-2)\beta r^{k+(2j-2)\beta}=0.
\end{equation}
Note that when $k>J$, the same computation as in~\eqref{e:dXkgeneral} and~\eqref{e:dXk} applies.

On the other hand, when $k<-J$, the leading term in~\eqref{e:Xkbig} is not eliminated and we get
\begin{equation}\label{e:partial}
\begin{split}
\partial_{\bar z}\big(X_{k}e^{i(k+1)\theta}\big) &=\frac{e^{i(k+2)\theta}}{16\beta^{2}}\sum_{j\geq 0}\tilde C_{k,j}\big(2|k|+(2j-2)\beta\big) r^{|k|+(2j-2)\beta}.
\end{split}
\end{equation}
Since $\beta\notin \NN$, we have $|k|>J>\beta$, so the leading term in~\eqref{e:partial} is given by $r^{|k|-2\beta}$ which satisfies $|k|-2\beta>-\beta$. Recall that $r=|z|$, so in terms of the geodesic coordinates $(\frakr, \theta)$ for which the associated volume form is $\frakr d\frakr d\theta$, the computation above implies that each term decays slower than $\frakr^{-1}$.


Combining the analysis above, we have justified the integration by parts in~\eqref{e:ibp} near a cone point with $\beta\notin \NN$.

\subsubsection{Developing maps near a cone point with $\beta=n\in \NN$}

Near such a cone point the developing map $f$ is given by
$$
f=\frac{az^{n}+b}{cz^{n}+d}
$$
for some $a,b,c,d\in \CC$ with $ad-bc=1$. By choosing a suitable coordinate and a representative of the developing map the metric is given by the same form as the non-integer case~\cite{FSX}:
$$
g=\frac{4n^{2}|z|^{2(n-1)}}{(1+|z|^{2n})^{2}}|dz|^{2}.
$$
Therefore we can again obtain the expansion of $\phi_{k}$ as following:
\begin{equation}\label{e:phikint}
\begin{split}
\phi_{k}=\sum_{j\geq 0}C_{k,j} r^{k+2jn}, -J\leq k; \\
\phi_{k}=\sum_{j\geq 0}C_{k,j} r^{|k|+2jn}, -J>k.
\end{split}
\end{equation}

The computation of $X$ and $\nabla^{(0,1)}X$ can now be applied verbatim, and we list the results here. First of all
\[
X=\Bigg(\sum_{k\in \ZZ} X_{k}(r) e^{i(k+1)\theta}\Bigg)\partial_{z}.
\]
In terms of local coordinates it is given by
\begin{equation}
X_{k}=e^{-i(k+1)\theta}\frac{(1+r^{2n})^{2}}{4n^{2}r^{2(n-1)}}\frac{e^{i\theta}}{2}
\bigg(\partial_{r}-\frac{1}{ir}\partial_{\theta}\bigg) \big(\phi_{k}(r)e^{ik\theta}\big).
\end{equation}
It immediately follows that
\begin{equation}\label{e:Xkint}
\begin{split}
X_{k}(r)=\sum_{j\geq 1}\tilde C_{k,j} r^{k+(2j-2)n+1}, \ -J\leq k,\\
X_{k}(r)=\sum_{j\geq 0}\tilde C_{k,j}r^{|k|+(2j-2)n+1}, \ -J>k.
\end{split}
\end{equation}

We then compute $\nabla^{(0,1)}X$ which is locally given by
$$
\sum_{k\in \ZZ} \partial_{\bar z}\big(X_{k}e^{i(k+1)\theta}\big) \partial_{z}\otimes d\bar z.
$$
As before we compute $\partial_{\bar z}\big(X_{k}e^{i(k+1)\theta}\big)=\frac{e^{i\theta}}{2} (\partial_{r}-\frac{1}{ir}\partial_{\theta}) (X_{k}(r)e^{i(k+1)\theta})$ to get
\begin{equation}\label{e:partialXkint}
\begin{split}
\partial_{\bar z}(X_{k}e^{i(k+1)\theta})=e^{i(k+2)\theta}\sum_{j\geq 2} (2j-2)n \tilde  C_{k,j} r^{k+(2j-2)n}, \ -J\le k,\\
\partial_{\bar z}(X_{k}e^{i(k+1)\theta})=e^{i(k+2)\theta}\sum_{j\geq 0} \big(-2k+(2j-2)n\big)\tilde C_{k,j} r^{|k|+(2j-2)n}, \ -J>k.
\end{split}
\end{equation}

Notice that $J=n-1$, so we can check again that in both cases above the term has enough decay to be integrable. In the first case the smallest exponent $k+2n>0$. In the second case, we have $|k|-2n>-n$ except when $k=-(J+1)=-n$. If $|k|-2n>-n$, then again notice that $r^{-n}\sim \frakr^{-1}$ so the term is integrable. If $k=-n$, then we have $-2k-2n=0$. In this case the leading term vanishes in $\nabla^{(0,1)}X$ and the next term in the expansion is integrable again.

\subsubsection{Justification of the integration by parts}
Now we are ready to prove Theorem~\ref{t:lambda} and Proposition~\ref{p:hol}.
\begin{proof}[Proof of Theorem~\ref{t:lambda}]
From the previous computation of $\nabla^{(0,1)}X$ near each conical point, we know that $|\nabla^{(0,1)}X|^{2}$ is integrable. Therefore using~\eqref{e:Bochner} and~\eqref{e:ibp} we have
\begin{equation}
0\leq\int |\nabla^{(0,1)}X|^{2} = \frac{1}{2}(\lambda-2)\int |X|^{2},
\end{equation}
which immediately shows that $\lambda\geq 2$.
\end{proof}
Now we look at the case when $\lambda=2$.
\begin{proof}[Proof of Proposition~\ref{p:hol}]
We combine~\eqref{e:ibp} and~\eqref{e:Bochner} to get
\begin{equation}\label{e:vanish}
\nabla^{(0,1)}X=0
\end{equation}
which shows that $\nabla^{(0,1)}X$ is holomorphic on $\Sigma\setminus {\rm supp}\, D$.

We next show that $X$ can be extended to a meromorphic vector field on $\Sigma$. From the expansion of $X$ near each cone point (see expressions~\eqref{e:Xk} \eqref{e:Xkbig} and \eqref{e:Xkint}), we see that $X$ is bounded by $|z|^{-J}$, therefore there cannot be any essential singularities. Hence $X$ must be meromorphic on $\Sigma$.

Moreover, we can see the worst order of pole of $X$ from the expansion. For the behavior near a smooth point, $X$ is smooth since $\phi$ is smooth. Near a cone point, we look at the expansion of $X$ and $\nabla^{(0,1)}X$. If $\beta\notin \NN$, then all coefficients $\tilde C_{k,j}$ for $k<-J$ in~\eqref{e:partial} has to vanish because $\nabla^{(0,1)}X=0$, and this shows that
the worst decay of $X$ is given by the $k=-J$ mode, which is $r^{-J+1}$ by~\eqref{e:Xk}. In particular, if $\beta<1$, then the worst decay is $r^{1}$, so it is actually a zero for $X$; if $\beta>1$, then the order of the pole is given by $r^{-[\beta]+1}$. On the other hand,  if $\beta=n\in \NN$, then the coefficient $C_{-J-1, 0}$ in~\eqref{e:partialXkint} might not vanish (since this term corresponds to $k=-n$ and does not appear in $\nabla^{(0,1)}X$), therefore the worst possible decay in $X$ is of the order $r^{-n+1}$.
\end{proof}
We remark here that the worst order of decay of $X$ described in the proposition above applies to any 2-eigenfunction. In the next step we assume in addition that $\phi$ is real-valued, and the decay estimate will be improved by relating to developing maps.

\subsection{From vector fields to reducible metrics}\label{ss:oneform}
From now on we assume that the eigenfunction $\phi$ is real-valued.
\begin{lemma}
The algebraic dual one-form of X, denoted by $\Omega$, is a meromorphic 1-form on $\Sigma$.
\end{lemma}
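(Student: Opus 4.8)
The plan is to realize $\Omega$ as the fiberwise reciprocal of $X$ and then read off its meromorphy directly from Proposition~\ref{p:hol}. On a Riemann surface the holomorphic tangent bundle $T^{1,0}\Sigma$ is a line bundle, so once $X$ is known to be a nonzero meromorphic section of it, its algebraic dual --- the unique one-form $\Omega$ characterized by $\langle \Omega, X\rangle \equiv 1$ --- is automatically a nonzero meromorphic section of the dual line bundle, the canonical bundle $K_\Sigma=(T^{1,0}\Sigma)^{*}$. Concretely, in a local holomorphic coordinate $z$ write $X=h\,\partial_{z}$ with $h=\phi^{,z}$; then $\Omega=h^{-1}\,dz$. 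The only thing to verify for global well-definedness is the transformation law: under a holomorphic change of coordinate $z\mapsto w$ one has $h\mapsto h\,\tfrac{dw}{dz}$ while $dz\mapsto \tfrac{dz}{dw}\,dw$, so the two factors cancel and $\Omega$ is the same one-form in every chart in which $h\neq 0$.

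First I would record that $X\not\equiv 0$, so that the reciprocal $h^{-1}$ makes sense. Indeed $\phi$ is a nonzero $2$-eigenfunction, hence nonconstant, and by \eqref{e:Deltag} the relation $\Delta_{g}\phi=2\phi$ is equivalent to $\partial_{z}\partial_{\bar z}\phi=-\tfrac12\,e^{2u}\phi$; since $\phi\not\equiv 0$ this is not identically zero, forcing $\partial_{\bar z}\phi\not\equiv 0$ and therefore $h=e^{-2u}\partial_{\bar z}\phi\not\equiv 0$. Thus $h^{-1}$ is a genuine meromorphic function on every chart where $h$ is meromorphic and not identically vanishing, and $\Omega=h^{-1}\,dz$ is defined away from the zero set of $X$.

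Next I would invoke Proposition~\ref{p:hol} to control $h$ everywhere. Away from ${\rm supp}\,D$ the identity $\nabla^{(0,1)}X=0$ makes $X$ holomorphic, so $h$ is holomorphic there and $\Omega=h^{-1}\,dz$ is holomorphic except at the isolated zeros of $h$, where it acquires poles of the matching order. At a cone point Proposition~\ref{p:hol} bounds the behavior of $X$ in the conformal coordinate $z$: $h$ has at worst a pole (of order $\le [\beta]-1$ when $\beta>1$, and a zero when $\beta<1$, $\beta\notin\NN$), and in particular never an essential singularity. Consequently $\Omega=h^{-1}\,dz$ has at each cone point either a zero (where $X$ has a pole) or a pole of finite order (where $X$ has a zero), and no essential singularity. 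Assembling these local statements shows that $\Omega$ is meromorphic on all of $\Sigma$.

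The step requiring genuine care --- rather than being a hard obstacle --- is the passage across the cone points, where the surface carries only the conical structure of $g$: meromorphy of $\Omega$ there is a statement about the underlying complex structure of $\Sigma$, not about $g$, and it is precisely the finiteness of the pole and zero orders of $h$ in the coordinate $z$, supplied by Proposition~\ref{p:hol}, that guarantees $h^{-1}$ stays meromorphic. Everything else reduces to the routine algebraic fact that the reciprocal of a nonzero meromorphic section of a line bundle is a meromorphic section of the dual bundle. I would not expect the real-valuedness of $\phi$ to enter this particular lemma; it is the meromorphy of $X$ together with $X\not\equiv 0$ that drives the argument, and real-valuedness is reserved for the subsequent identification of $\Omega$ with a character one-form of a developing map.
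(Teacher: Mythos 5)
Your proof is correct and takes essentially the same route as the paper: the paper's entire proof is the one-line remark that the lemma ``follows from Proposition~\ref{p:hol} that $X$ is meromorphic,'' which is exactly the dualization argument ($X=h\,\partial_z \mapsto \Omega=h^{-1}\,dz$) that you spell out. Your explicit check that $X\not\equiv 0$ (via $\partial_z\partial_{\bar z}\phi=-\tfrac12 e^{2u}\phi\not\equiv 0$), so that the reciprocal is a genuine meromorphic section, is a detail the paper leaves implicit, and your closing observation that real-valuedness of $\phi$ is not needed here matches the paper's usage.
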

\begin{proof}
This follows from Proposition~\ref{p:hol} that $X$ is meromorphic.
\end{proof}

We denote by $D_X:=(\Omega)$ the divisor associated to the meromorphic one-form $\Omega$.
Let $(U,z)$ be a coordinate chart that does not intersect ${\rm supp}\, D_X\bigcup {\rm supp}\, D $.
Denote $g=e^{2u}|dz|^2$ on $U$, and let $X=\frac{1}{4}F(z)\partial_{z}$. That is, $F(z)=4e^{-2u}\phi_{\bar z}$.
\begin{lemma}
\label{lemma:exact}
There exists
a global positive constant $C=C_\phi$ which is independent of $U$, such that
\begin{equation}\label{e:C}
\phi_z=-\frac{\phi^2-C^2}{F(z)},\quad e^{2u}=-4\frac{\phi^2-C^2}{|F(z)|^2}.
\end{equation}
Moreover, the real part of the one-form $\Omega$ is exact on $\Sigma\backslash \big({\rm supp}\, D_X\bigcup {\rm supp}\, D \big)$.
\end{lemma}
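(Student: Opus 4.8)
The plan is to distill from $\Delta_{g}\phi=2\phi$ a single real-valued function which turns out to be holomorphic, hence constant, and to recognize that constant as $C^{2}$. I work on the chart $(U,z)$, write $g=e^{2u}|dz|^{2}$ and $X=\tfrac14 F(z)\partial_{z}$, so that $F=4e^{-2u}\phi_{\bar z}$, that is $\phi_{\bar z}=\tfrac14 e^{2u}F$. Because $\phi$ is real, taking complex conjugates gives $\phi_{z}=\tfrac14 e^{2u}\bar F$, and hence $\phi_{z}F=\tfrac14 e^{2u}|F|^{2}$ is real and nonnegative. I then set
\[
G:=\phi_{z}F+\phi^{2}=\tfrac14 e^{2u}|F|^{2}+\phi^{2},
\]
which is real-valued and invariant under holomorphic coordinate changes (the quantity $e^{2u}|F|^{2}$ is just $|X|_{g}^{2}$ up to a fixed factor), so that $G$ descends to a well-defined function on $\Sigma\setminus({\rm supp}\,D_X\cup {\rm supp}\,D)$.

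The key computation is that $\partial_{\bar z}G=0$. Since $\lambda=2$ forces $\nabla^{(0,1)}X=0$ by~\eqref{e:vanish}, the coefficient $F$ is holomorphic on $U$, i.e. $\partial_{\bar z}F=0$. Rewriting the eigenvalue equation via~\eqref{e:Deltag} as $\partial_{z}\partial_{\bar z}\phi=-\tfrac12 e^{2u}\phi$, I compute
\[
\partial_{\bar z}(\phi_{z}F)=(\partial_{z}\partial_{\bar z}\phi)F=-\tfrac12 e^{2u}\phi F,\qquad \partial_{\bar z}(\phi^{2})=2\phi\,\phi_{\bar z}=\tfrac12 e^{2u}\phi F,
\]
so the two contributions cancel and $\partial_{\bar z}G=0$. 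Thus $G$ is holomorphic and real on the connected set $\Sigma\setminus({\rm supp}\,D_X\cup {\rm supp}\,D)$, hence a single real constant; as $G\ge 0$ and $\phi\not\equiv 0$ the constant is strictly positive, and I write it $C^{2}$ with $C=C_{\phi}>0$, independent of $U$. Reading off $G=C^{2}$ gives $\phi_{z}F=C^{2}-\phi^{2}$, which is the first identity in~\eqref{e:C}; dividing the real equation $\tfrac14 e^{2u}|F|^{2}=C^{2}-\phi^{2}$ by $|F|^{2}$ yields the second. In particular $C^{2}-\phi^{2}=\tfrac14 e^{2u}|F|^{2}>0$ away from $D_X$, so $|\phi|<C$ there.

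For the exactness claim I use that, $\Omega$ being the dual one-form of $X=\tfrac14 F\partial_{z}$, one has $\Omega=c\,dz/F$ on $U$ for a fixed constant $c$. Substituting $1/F=-\phi_{z}/(\phi^{2}-C^{2})$ and its conjugate,
\[
\Omega+\bar\Omega=c\Big(\frac{dz}{F}+\frac{d\bar z}{\bar F}\Big)=-c\,\frac{\phi_{z}\,dz+\phi_{\bar z}\,d\bar z}{\phi^{2}-C^{2}}=c\,\frac{d\phi}{C^{2}-\phi^{2}}.
\]
Since $|\phi|<C$, the right-hand side is $d$ of the single-valued real primitive $\tfrac{c}{2C}\log\frac{C+\phi}{C-\phi}$, so $2\,{\rm Re}\,\Omega=\Omega+\bar\Omega$ is exact, which proves the final assertion.

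I expect the main obstacle to be the globalization rather than any single local computation: one must check carefully that $G$ is genuinely coordinate-independent, so that its local constancy on each chart promotes to one global constant $C$, and one must extract the strict inequality $|\phi|<C$ (from positivity of $e^{2u}$ off the cone points) in order that the logarithmic primitive be single-valued and $\mathrm{Re}\,\Omega$ be honestly exact rather than merely closed.
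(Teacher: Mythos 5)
Your proof is correct and is essentially the paper's own argument in lightly repackaged form: your function $G=\phi_{z}F+\phi^{2}$ is exactly the product $g(z)F(z)$ that the paper shows to be holomorphic, real and positive (the paper first extracts the holomorphic function $g(z)=\phi_{z}+\phi^{2}/F$ from the identity $\phi_{z\bar z}=-(\phi^{2}/F)_{\bar z}$ and then uses reality of $\phi$, while you differentiate $G$ directly), and your exactness computation via $d\bigl(\tfrac{1}{2C}\ln\tfrac{C+\phi}{C-\phi}\bigr)$ coincides with the paper's. Your explicit remarks on coordinate-invariance of $G=4|X|_{g}^{2}+\phi^{2}$ and on the strict bound $|\phi|<C$ only make precise points the paper leaves implicit.
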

\begin{proof}
Since $\phi$ is a 2-eigenfunction of $\Delta_g$, in local coordinates we have
$$-4e^{-2u}\phi_{z{\bar z}}=2\phi\quad\quad\quad \text{and}\quad\quad\quad
\phi_{z{\bar z}}=-\frac{\phi e^{2u}}{2}=-\frac{2\phi \phi_{\bar z}}{F}=-\bigg(\frac{\phi^2}{F}\bigg)_{\bar z}.$$
Since $F$ does not vanish anywhere in $U$, there exists a holomorphic function $g(z)$ on $U$ such that
\[\phi_z=-\frac{\phi^2}{F(z)}+g(z).\]
Since $\phi$ is real-valued, we also have
\[
\overline{\phi_{\bar z}}=-\frac{\phi^{2}}{{F(z)}}+{g(z)}.
\]
Combining with $F(z)=4e^{-2u}\phi_{\bar z}$, we find that
\[e^{2u}=\frac{4\overline{\phi_{\bar z}}}{\overline{F(z)}}=-\frac{4\phi^2}{|F(z)|^2}+\frac{4g(z)}{\overline {F(z)}}=-\frac{4\phi^2}{|F(z)|^2}+\frac{4g(z)F(z)}{|F(z)|^2}.\]
Therefore the
holomorphic function $4g(z)F(z)$ satisfies
$$
4g(z)F(z)=\big(|F(z)|^2 e^{2u}+4\phi^2\big),
$$
where the right hand side is a positive real function. It follows that $g(z)F(z)=C^{2}$ for some positive constant $C=C_U$ on $U$.
Since $\Sigma\backslash \big({\rm supp}\, D_X\bigcup {\rm supp}\, D\big) $ is connected, the constant $C=C_U$ does not depend on $U$. This proves~\eqref{e:C}.

By the first equality in \eqref{e:C}, we have
\begin{equation}\label{e:dzf}
2\Re\,\frac{dz}{F}=\frac{dz}{F}+\frac{d{\bar z}}{\bar F}=\frac{d\phi}{C^2-\phi^2}=d\left(\frac{1}{2C}\,\ln\,\frac{C+\phi}{C-\phi}\right).
\end{equation}
Since $\Omega=4\frac{dz}{F}$ and $C$ does not depend on $U$, we have that $\Re\,\Omega$ is exact on
$\Sigma\backslash \big({\rm supp}\, D_X\bigcup {\rm supp}\, D \big)$.
\end{proof}

\begin{lemma}
\label{lemma:char}
Define the following one-form on $\Sigma$
\begin{equation}\label{e:omega}
\omega:=-2C\frac{dz}{F}.
\end{equation}
Consider the multi-valued holomorphic function on $\Sigma\backslash \{\rm{poles\ of\ }\omega\}$ defined by
$$
f(z):=\exp\bigg(\int^z\,\omega\bigg),
$$
it satisfies the following:
$$
f:\Sigma\backslash \big({\rm supp}\, (\omega) \bigcup {\rm supp}\, D\big)  \rightarrow {\Bbb C}^*
$$
is a multi-valued holomorphic function with monodromy in $U(1)$.
Moreover, on $\Sigma\backslash \big({\rm supp}\, (\omega)\bigcup {\rm supp}\, D\big)$ we have
$$
f^*\,g_{\rm st}=g
$$
and
\begin{equation}
\label{equ:eigenf}
\phi=C\cdot\frac{1-|f|^2}{1+|f|^2}.
\end{equation}
\end{lemma}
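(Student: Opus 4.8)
The plan is to read off all three claims from the relation between $\omega$ and the one-form $\Omega=4\,dz/F$ already analyzed in Lemma~\ref{lemma:exact}. Since $\omega=-2C\,dz/F=-\tfrac{C}{2}\Omega$, the divisor $(\omega)$ coincides with $D_X=(\Omega)$, so the domain in question is exactly the one on which Lemma~\ref{lemma:exact} applies, and there $\Re\,\omega=-\tfrac{C}{2}\Re\,\Omega$ is exact. First I would observe that $\omega$ is a holomorphic one-form away from its poles, so $\int^z\omega$ is a (multi-valued) holomorphic function and $f=\exp\!\big(\int^z\omega\big)$ is a multi-valued holomorphic map into $\CC^*$ (an exponential never vanishes). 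The monodromy is governed by the periods $\oint_\gamma\omega$: going around a loop $\gamma$ replaces $f$ by $f\cdot\exp\!\big(\oint_\gamma\omega\big)$. Because $\Re\,\omega$ is exact, every period satisfies $\oint_\gamma\Re\,\omega=0$, hence $\oint_\gamma\omega\in i\RR$ and $\exp\!\big(\oint_\gamma\omega\big)\in U(1)$. This establishes the first assertion.

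Next I would pin down $|f|$. Integrating the identity \eqref{e:dzf}, namely $2\Re(dz/F)=d\big(\tfrac{1}{2C}\ln\tfrac{C+\phi}{C-\phi}\big)$, after multiplying by $-2C$ gives $\Re\,\omega=-\tfrac12\,d\ln\tfrac{C+\phi}{C-\phi}$, so that $\log|f|=\Re\int^z\omega=-\tfrac12\ln\tfrac{C+\phi}{C-\phi}+\mathrm{const}$. Choosing the base point of integration to absorb the additive constant yields $|f|^2=\tfrac{C-\phi}{C+\phi}$; this is legitimate since $e^{2u}>0$ forces $\phi^2<C^2$ by the second equation in \eqref{e:C}, so the right-hand side is positive. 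Solving $|f|^2=(C-\phi)/(C+\phi)$ for $\phi$ gives precisely $\phi=C\cdot\tfrac{1-|f|^2}{1+|f|^2}$, which is \eqref{equ:eigenf}.

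Finally I would verify $f^*g_{\rm st}=g$ by a direct computation. Differentiating $f=\exp\!\big(\int^z\omega\big)$ gives $f'=-\tfrac{2C}{F}\,f$, so $|f'|^2=|f|^2\cdot 4C^2/|F|^2$. Using $|f|^2=(C-\phi)/(C+\phi)$ one finds $1+|f|^2=2C/(C+\phi)$, whence
$$
\frac{4|f'|^2}{(1+|f|^2)^2}=\frac{4(C^2-\phi^2)}{|F|^2}=-4\,\frac{\phi^2-C^2}{|F|^2}=e^{2u},
$$
where the last equality is again the second formula in \eqref{e:C}. Comparing with \eqref{e:gst}, this reads $f^*g_{\rm st}=e^{2u}|dz|^2=g$, as desired.

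The individual computations are short, so I expect the only genuine point requiring care to be the global coherence of the argument: it is the exactness in Lemma~\ref{lemma:exact}, valid simultaneously on the whole connected punctured surface $\Sigma\backslash\big({\rm supp}\,(\omega)\cup{\rm supp}\,D\big)$, that makes $\log|f|$ single-valued and forces \emph{all} periods of $\omega$ to be purely imaginary at once. This is exactly what confines the monodromy to $U(1)$ rather than to the full multiplicative group $\CC^*$, and what guarantees that the constant $C$ used in the normalization is the same in every chart.
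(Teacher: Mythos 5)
Your proposal is correct and follows essentially the same route as the paper's own proof: the $U(1)$ monodromy is deduced from the exactness of $\Re\,\omega$ established in Lemma~\ref{lemma:exact}, the identity $|f|^2=\frac{C-\phi}{C+\phi}$ (hence \eqref{equ:eigenf}) comes from integrating \eqref{e:dzf}, and $f^*g_{\rm st}=g$ follows from $|f'|^2=\frac{4C^2|f|^2}{|F|^2}$ combined with the second formula in \eqref{e:C}. The only difference is that you make explicit the details the paper leaves as ``direct computation'' (the period argument, the base-point normalization, and the positivity $\phi^2<C^2$), all of which are handled correctly.
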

\begin{proof}
We first observe that the divisor $D_X:=(\Omega)$ is equal to $(\omega)$, since $\Omega$ is a multiple of $\omega$.  The monodromy property of $f$ follows from the previous lemma that $\Re \omega$ is exact on
$\Sigma\backslash \big({\rm supp}\, (\omega)\bigcup {\rm supp}\, D\big)$.
Using~\eqref{e:dzf}, we have
\[|f|^2=\exp\bigg(\int^z\, 2\Re\,\omega\bigg)=\frac{C-\phi}{C+\phi},\]
which proves~\eqref{equ:eigenf}.

In order to show $f^*\,g_{\rm st}=g$, it suffices to show that in any complex coordinate chart $(U,z)$ not intersecting
${\rm supp}\, D_X\bigcup {\rm supp}\, D $,
we have
\[\frac{4|f'(z)|^2}{(1+|f|^2)^2}=4\frac{C^2-\phi^2}{|F|^2}.\]
Since $|f'(z)|^2=\frac{4C^2|f|^2}{|F|^2}$ and $|f|^2=\frac{C-\phi}{C+\phi}$,  the above equation follows from a direct computation.
\end{proof}


\begin{lemma}
\label{lemma:order}
The one-form $\omega$ defined in~\eqref{e:omega} has only simple poles with real residues, and its real part is exact outside its poles.
Moreover, the ${\Bbb R}$-divisor $D$ represented by the metric $g$ and the ${\Bbb Z}$-divisor $(\omega)$ associated to $\omega$ are related by
\[D=(\omega)_0+\sum_{P:\,{\rm pole\ of}\ \omega}\, \Big(|{\rm Res}_P(\omega)|-1\Big)[P],\]
where $(\omega)_0$ is the zero divisor of the meromorphic one-form $\omega$.
In particular, a cone point of non-integer angle $2\pi\beta$ must be a simple pole with residue $\pm \beta$ of $\omega$, and a cone point of integer angle $2\pi n$ may be either a simple pole with residue $\pm n$ or a zero of $\omega$ with multiplicity $n-1$.
\end{lemma}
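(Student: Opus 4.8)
The plan is to extract everything about $\omega$ from the factorization $\omega=d\log f=df/f$, which is available because $f=\exp\bigl(\int^{z}\omega\bigr)$ by Lemma~\ref{lemma:char}, together with the two structural facts proved there: that $f$ is a developing map, so $f^{*}g_{\rm st}=g$, and that its monodromy lies in $U(1)$.

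First I would show every residue is real. For a small loop $\gamma$ encircling a single point $P$ one has $\oint_{\gamma}\omega=2\pi i\,{\rm Res}_{P}(\omega)$, while on the other hand $\oint_{\gamma}\omega=\oint_{\gamma}d\log f$ is the total change of $\log f$ around $\gamma$. Since the monodromy of $f$ is contained in $U(1)$, going once around $\gamma$ sends $f\mapsto e^{i\alpha}f$ for some $\alpha\in\RR$ (with $\alpha=0$ when $f$ is single-valued near $P$), so the change of $\log f$ equals $i\alpha$ up to an integer multiple of $2\pi i$. In either case $\oint_{\gamma}\omega\in i\RR$, hence ${\rm Res}_{P}(\omega)\in\RR$.

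Next I would establish that the poles are simple and identify the three possible local shapes of $\omega$. If $\omega$ had a pole of order $\geq 2$ at $P$, then $\int^{z}\omega$ would have a genuine pole and $f=\exp\bigl(\int^{z}\omega\bigr)$ an essential singularity; but a developing map has none, since its local normal forms (recalled in Section~\ref{s:eigen}) are $z^{\beta}$ or a M\"obius transform of $z^{n}$. Hence every pole is simple. Writing $\rho={\rm Res}_{P}(\omega)$ at such a pole gives $\omega\sim(\rho/z)\,dz$ and therefore $f\sim z^{\rho}\cdot(\text{nonvanishing holomorphic})$. If instead $\omega$ is holomorphic at $P$, then so is $\int^{z}\omega$ and $f$ is holomorphic and nonvanishing there; writing $m$ for the order of vanishing of $\omega$ (so $m=0$ when $\omega(P)\neq 0$) one gets $f=w_{0}+c\,z^{m+1}+\cdots$ with $w_{0}\neq 0,\infty$ and $c\neq 0$.

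The key step is to convert each local shape into the cone angle of $g=f^{*}g_{\rm st}$ at $P$. Substituting into $f^{*}g_{\rm st}=\frac{4|f'|^{2}}{(1+|f|^{2})^{2}}|dz|^{2}$ shows the conformal factor behaves like $|z|^{2(\kappa_{P}-1)}$, so the cone angle is $2\pi\kappa_{P}$, where $\kappa_{P}=|\rho|$ at a simple pole (both $\rho>0$, where $f\to 0$, and $\rho<0$, where $f\to\infty$, give angle $2\pi|\rho|$), $\kappa_{P}=m+1$ at a zero of order $m$, and $\kappa_{P}=1$ when $\omega(P)\neq 0,\infty$. Comparing with $D=\sum_{P}(\beta_{P}-1)[P]$, where the angle parameter satisfies $\beta_{P}=\kappa_{P}$ (and $\beta_{P}=1$ at smooth points), one reads off the local contributions: a pole contributes $(|{\rm Res}_{P}(\omega)|-1)[P]$, a zero of order $m$ contributes $m[P]$ (its contribution to $(\omega)_{0}$), and a regular nonvanishing point contributes $0$. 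Summing over all points yields $D=(\omega)_{0}+\sum_{P:\,{\rm pole}}(|{\rm Res}_{P}(\omega)|-1)[P]$. The two ``in particular'' statements follow at once: a non-integer angle $2\pi\beta$ forces $\kappa_{P}=\beta\notin\NN$, which can only come from a simple pole with $|\rho|=\beta$, i.e.\ residue $\pm\beta$; an integer angle $2\pi n$ forces $\kappa_{P}=n$, realized either by a simple pole with residue $\pm n$ or by a zero of order $n-1$. The exactness of $\Re\,\omega$ away from the poles was already proved in Lemma~\ref{lemma:char}. I expect the main obstacle to be the pole-order step together with the angle computation: one must ensure that $f=\exp\bigl(\int^{z}\omega\bigr)$ cannot develop an essential singularity, and that the crude local form $f\sim z^{\rho}$ already pins the cone angle down to exactly $2\pi|\rho|$, which is precisely where the pullback relation $f^{*}g_{\rm st}=g$ does the essential work.
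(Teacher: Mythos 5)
Your proof is correct, but it runs the local analysis in the opposite direction from the paper's. The paper classifies points by the metric (smooth point, non-integer cone angle, integer cone angle), invokes at each such point the normal forms of developing maps with ${\rm U}(1)$ monodromy from \cite{CWWX2015} to write $f$ explicitly, and then reads off $\omega=df/f$; simplicity of the poles and the residue values $\pm 1,\pm\beta,\pm n$ come out of these explicit formulas with no separate argument. You instead classify points by the behavior of $\omega$ (simple pole, zero of order $m$, regular nonvanishing point), reconstruct $f=\exp\bigl(\int^z\omega\bigr)$ from each local shape, compute the cone angle of $g=f^*g_{\rm st}$ in each case ($2\pi|\rho|$, $2\pi(m+1)$, $2\pi$ respectively), and match against the known divisor $D$; this treats the paper's three cases uniformly and turns the divisor identity into pointwise bookkeeping via $\beta_P=\kappa_P$. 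The price is that you must establish separately that the residues are real (which you get correctly from the ${\rm U}(1)$ monodromy --- equivalently from exactness of $\Re\,\omega$, which, as a minor attribution point, was proved in Lemma~\ref{lemma:exact} rather than Lemma~\ref{lemma:char}) and that the poles are simple; for the latter you still lean on the same external input as the paper, namely the local classification of developing maps, to rule out an essential singularity of $f$. On that step you should cite the local statements the paper itself uses --- \cite[Lemma 3.2]{CWWX2015} and the proof of \cite[Theorem 1.4]{CWWX2015}, which apply to $f|_{U^*}$ precisely because Lemma~\ref{lemma:char} makes it a developing map of $g|_{U^*}$ with ${\rm U}(1)$ monodromy --- rather than the lemma recalled in Section~\ref{s:eigen}, which is phrased for a multiplicative developing map of a globally reducible metric and would be circular here, since reducibility of $g$ is exactly what is being proved. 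With that citation adjusted, your argument is complete and is a clean alternative to the paper's case-by-case computation.
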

\begin{proof} We divide the proof into the following three cases.

\noindent\textbf{Case 1: a smooth point}

Let $P$ be a smooth point of the metric $g$ where $X$ vanishes. We show that {\it $P$ is a simple pole of $\omega$ such that ${\rm Res}_P(\omega)$ equals either $-1$ or $1$.} Take an open disc $U$ centered at $P$ such that $U^*:=U\backslash \{P\}$ does not intersect ${\rm supp}\, (\omega)\bigcup {\rm supp}\, D$. Since $g$ is smooth on $U$, by \cite[Lemma 3.2]{CWWX2015}, we can choose a developing map of $g|_{U^{*}}$, denoted by $h:U^{*}\to \overline{\CC}$, and a suitable coordinate $z$ with $z(P)=0$, such that
$h(z)=\frac{az+b}{cz+d}$ with $ad-bc=1$. On the other hand, by Lemma \ref{lemma:char},
    the restriction $f|_{U^*}$ is a developing map of $g$. Since $f$ has trivial local monodromy around $P$, $f$ extends to $U$ and
    coincides with $h$ up to
  a  ${\rm PSU}(2)$ transformation. Therefore any representative ${\frak f}$ of the developing map $f$  is also given by the form ${\frak f}(z)=\frac{az+b}{cz+d}$ with $ad-bc=1$ on $U$. Hence on $U$
we have
    \[\omega=\frac{df}{f}=\frac{d{\frak f}}{\frak f}=\frac{dz}{(az+b)(cz+d)}.\]
    Since $\omega$ has a pole at $z=0$, we have that $bd=0$ but $b$ and $d$ cannot both be 0 because $ad-bc=1$, which implies that $P$ is a simple pole of $\omega$ and has residue $\pm 1$.

\noindent\textbf{Case 2: a cone point of non-integer angle}

Let $P$ be a cone point of angle $2\pi\beta$, where $\beta>0$ is {\it not} an integer. We show that {\it $P$ is a simple pole of $\omega$ such that ${\rm Res}_P(\omega)$ equals either $-\beta$ or $\beta$.}
   Similar to the previous case, we take an open disc $U$ centered at $P$ such that $U^*=U\backslash \{P\}$ does not intersect ${\rm supp}\, (\omega)\bigcup {\rm supp}\, D$. By Lemma \ref{lemma:char}, the restriction $f|_{U^*}$ is a developing map of $g|_{U^*}$.  Since $f|_{U^*}$ has monodromy in ${\rm U}(1)$ and $g|_{U^*}$ has a cone point of angle $2\pi\beta$ at $P$,  by the proof of Theorem 1.4 in \cite{CWWX2015}, there exists in $U$ a complex coordinate $z$ which is centered at $P$, such that in any disc contained in $U^*$, each representative ${\frak f}$ of $f$ has the form of either
    ${\frak f}(z)=\mu z^\beta$ or  ${\frak f}(z)=\tilde \mu z^{-\beta}$, where $\mu$ and $\tilde \mu$ are nonzero constants. Therefore, in a neighborhood of $P$ we have $\omega=\frac{d{\frak f}}{\frak f}=\pm \beta \frac{dz}{z}$.

\noindent\textbf{Case 3: a cone point of integer angle}

Let $P$ be a cone point of angle $2\pi n$, where $n>1$ is  an integer. We show that {\it $P$ is either a simple pole with residue $\pm n$ or a zero of $\omega$ with multiplicity $(n-1)$.}
Take an open disc $U$ centered at $P$ such that $U^*=U\backslash \{P\}$ does not intersect ${\rm supp}\, (\omega)\bigcup {\rm supp}\, D$. Since $g$ is smooth on $U^*$ and has a cone point with angle $2\pi n\in 2\pi{\Bbb Z}$ at $P$, by \cite[Lemma 3.2]{CWWX2015}, we can choose a developing map $h:U\to \overline{\CC}$ for $g|_U$ and a suitable coordinate $z$ with $z(P)=0$, such that $h$ is given by $h(z)=\frac{az^n+b}{cz^n+d}$ with $ad-bc=1$.
    On the other hand, by Lemma \ref{lemma:char},
    the restriction $f|_{U^*}$ is a developing map of $g|_{U^*}$. Since $f$ has trivial monodromy around $P$, $f$ extends to $U$ and
    coincides with $h$ up to a  ${\rm PSU}(2)$ transformation. Therefore each representative ${\frak f}$ of $f$ also has the form of ${\frak f}(z)=\frac{az^n+b}{cz^n+d}$ with $ad-bc=1$ on $U$. Hence
    on $U$ we have
    \[\omega=\frac{df}{f}=\frac{d{\frak f}}{\frak f}=\frac{n\,z^{n-1}}{(az^n+b)(cz^n+d)}\, dz.\]
    If $bd\not=0$, then $P$ is a zero of $\omega$ with multiplicity $(n-1)$. Otherwise, $P$ is a simple pole of $\omega$ with residue $\pm n$ .
\end{proof}

\begin{proof}[Proof of theorem~\ref{t:reducible}] It follows immediately from Lemma \ref{lemma:order}. Moreover, $\omega$ is
a character one-form \big(\cite[Definition 1.3]{CWWX2015}\big) of the reducible metric $g$.
\end{proof}

\begin{corollary}
The function $\phi$ extends continuously to $\Sigma$ and is smooth outside those poles with non-integer residues of $\omega$.
The positive constant $C$ in Lemma \ref{lemma:exact} equals $\max_\Sigma |\phi|$. Moreover,
$\phi$ achieves the maximum $C$ (resp. the minimum $-C$) at each pole of $\omega$ with positive (resp. negative) residue.
Each zero of $\omega$ is a saddle point of $\phi$.
\end{corollary}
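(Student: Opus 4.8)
The plan is to read off every assertion from the explicit local normal forms of a representative $\frakf$ of the developing map $f$ obtained in the proof of Lemma~\ref{lemma:order}, substituted into the identity $\phi=C\,\frac{1-|f|^{2}}{1+|f|^{2}}$ of Lemma~\ref{lemma:char}. For the continuity and smoothness statement I would go through the three cases of that lemma. At a generic smooth point $f$ is a local biholomorphism, so $|f|^{2}$, and hence $\phi$, is smooth. At a smooth point where $X$ vanishes, or at an integer cone point, a representative has the form $\frac{az+b}{cz+d}$ or $\frac{az^{n}+b}{cz^{n}+d}$; writing $f=P/Q$ with $P,Q$ polynomials in $z^{m}$ ($m\in\{1,n\}$) having no common zero, we have $\phi=C\,\frac{|Q|^{2}-|P|^{2}}{|P|^{2}+|Q|^{2}}$ with strictly positive denominator, so $\phi$ is smooth. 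At a non-integer cone point, where $\omega$ has residue $\pm\beta$ with $\beta\notin\NN$, one gets $\phi=C\,\frac{1-|\mu|^{2}|z|^{2\beta}}{1+|\mu|^{2}|z|^{2\beta}}$ for residue $+\beta$ (the residue $-\beta$ case being analogous with value $-C$), which is continuous at $z=0$ but fails to be smooth because of the $|z|^{2\beta}$ term. This gives the first claim: $\phi$ is continuous on $\Sigma$ and smooth away from the poles of $\omega$ with non-integer residue.

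Next I would settle the saddle-point claim, as it is needed below. Let $P$ be a zero of $\omega$, that is, an integer cone point of angle $2\pi n$ with $\frakf=\frac{az^{n}+b}{cz^{n}+d}$ and $bd\neq 0$, so that $w_{0}:=f(P)=b/d$ is finite and nonzero. Put $\Phi(w)=C\,\frac{1-|w|^{2}}{1+|w|^{2}}$; this is smooth with $\nabla\Phi(w_{0})\neq 0$ precisely because $w_{0}\neq 0$. Using $f(z)-w_{0}=\frac{z^{n}}{d\,(cz^{n}+d)}\sim z^{n}/d^{2}$, I would expand
\[
\phi(z)-\phi(P)=\Phi(f(z))-\Phi(w_{0})=\Re\big(c_{0}\,z^{n}\big)+\calO(|z|^{n+1}),\qquad c_{0}\neq 0.
\]
Since $n\geq 2$, the harmonic leading term $\Re(c_{0}z^{n})$ has an isolated critical point at $z=0$ with $2n$ alternating nodal sectors; in particular $d\phi(P)=0$ and $\phi$ takes values both above and below $\phi(P)$ in every neighborhood, so $P$ is a (degenerate) saddle.

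For the remaining claims I would first note that the identity $\phi=C\,\frac{1-|f|^{2}}{1+|f|^{2}}$ together with the continuity of $\phi$ forces $|\phi|\le C$ on all of $\Sigma$, while the local forms give $\phi\to C$ at a pole of positive residue (where $|f|\to 0$) and $\phi\to -C$ at a pole of negative residue (where $|f|\to\infty$). It remains to show these bounds are attained at the asserted places. As $\phi$ is a nonzero eigenfunction with eigenvalue $2$ it is non-constant, hence attains its maximum $M$ at some $P_{0}$. If $P_{0}$ is an ordinary point then $\phi_{\bar z}(P_{0})=0$, so $X(P_{0})=0$ and Case~1 of Lemma~\ref{lemma:order} makes $P_{0}$ a simple pole; if $P_{0}$ is a cone point then by Lemma~\ref{lemma:order} it is either a pole or a zero of $\omega$, and the zero alternative is ruled out by the saddle-point statement just proved. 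A negative-residue pole would give $M=-C$ and hence $\phi\equiv -C$, contradicting non-constancy, so $P_{0}$ is a positive-residue pole and $M=C$. The symmetric argument applied to $\min\phi$ gives $-C$ at the negative-residue poles. Thus $C=\max_{\Sigma}|\phi|$, and $\phi$ attains $C$ (resp. $-C$) exactly at the poles of $\omega$ of positive (resp. negative) residue.

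I expect the genuine difficulty to lie in the interaction of the last two steps: upgrading the a priori bound $|\phi|\le C$ to equality requires identifying where the extrema sit, which in turn forces one to exclude both ordinary interior maxima (handled by Case~1 of Lemma~\ref{lemma:order}) and the zero-of-$\omega$ alternative (handled by the saddle analysis). Organizing these so that the saddle-point statement is established first keeps the argument non-circular. By comparison, the case-by-case verification of smoothness and the computation of the leading coefficient $c_{0}$ are routine once the normal forms of Lemma~\ref{lemma:order} are available.
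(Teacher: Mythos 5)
Your proposal is correct and follows essentially the same route as the paper: the paper's proof is precisely to combine \eqref{equ:eigenf} with Lemma~\ref{lemma:order} and the local normal forms of $f$ established in its proof, which is exactly what you carry out in detail. Your added maximum-point argument (ruling out ordinary critical points via Case~1 and zeros of $\omega$ via the saddle analysis) is a sound and careful way of filling in the step $C=\max_\Sigma|\phi|$, with the bonus that it shows poles of both signs actually exist.
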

\begin{proof}
It follows from \eqref{equ:eigenf}, Lemma \ref{lemma:order} and the local behaviour of $f$ near cone points described in the proof of Lemma~\ref{lemma:order}.
\end{proof}
\begin{remark}
This corollary shows that $\phi$ is also in the Friedrichs extension.
\end{remark}

\subsection{The dimension of the 2-eigenspace}\label{ss:dim}
\begin{theorem}
\label{thm:hol_eigenspace}
Let ${\Bbb E}_{\Bbb R}^{\rm Hol}$ be the real vector space of real 2-eigenfunctions of $\Delta_g^{\rm Hol}$ for a reducible metric $g$. Then $\dim\,{\Bbb E}_{\Bbb R}^{\rm Hol}$ equals either $1$ or $3$. Moreover,  $\dim\, {\Bbb E}_{\Bbb R}^{\rm Hol}=3$ if and only if $g$ is the pullback metric $f^*g_{\rm st}$ by a branched cover $f:\Sigma\to \overline{\CC}$.
\end{theorem}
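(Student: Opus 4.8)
The plan is to set up a bijective-type correspondence between nonzero real $2$-eigenfunctions and (classes of) multiplicative developing maps, and then to count using the $\mathrm{PSU}(2)$-ambiguity of the developing map together with the monodromy. First I would invoke the structure already established: by Lemma~\ref{lemma:exact} and Lemma~\ref{lemma:char}, every nonzero real-valued $\phi\in\calD^{\Hol}$ with $\Delta_g\phi=2\phi$ has the form $\phi=C\,\frac{1-|f|^2}{1+|f|^2}$, where $f$ is a multiplicative developing map of $g$ and $C=\max_\Sigma|\phi|>0$. I would then record the three spherical $2$-eigenfunctions of Example~\ref{ex:three} as functions on the target, $h_1(w)=\frac{1-|w|^2}{1+|w|^2}$, $h_2(w)=\Re\frac{2w}{1+|w|^2}$, $h_3(w)=\Im\frac{2w}{1+|w|^2}$, which are exactly the coordinate functions on $\bS^2\subset\RR^3$ under stereographic projection, so that $\phi=C\,(h_1\circ f)$.

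Next I would use that any two developing maps of $g$ differ by an element of ${\rm PSU}(2)$. Fixing one reference multiplicative developing map $f_0$, I would write $f=A\,f_0$ with $A\in{\rm PSU}(2)\cong{\rm SO}(3)$. Since $h_1$ is a linear coordinate on $\bS^2$, its precomposition $h_1\circ A$ is again a linear coordinate, hence a real linear combination $a_1 h_2+a_2 h_3+a_3 h_1$ with $(a_1,a_2,a_3)$ a unit vector. Consequently $\phi=C\,(a_1 h_2+a_2 h_3+a_3 h_1)\circ f_0$, so every real $2$-eigenfunction lies in $\mathrm{span}_{\RR}\{h_1\circ f_0,\ h_2\circ f_0,\ h_3\circ f_0\}$. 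This gives the upper bound $\dim\,{\Bbb E}_{\Bbb R}^{\rm Hol}\le 3$.

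Finally I would split according to the monodromy of $f_0$. Under the generator $w\mapsto e^{i\theta}w$ of the monodromy, $h_1\circ f_0$ is invariant while $(h_2\circ f_0,\ h_3\circ f_0)$ rotate by angle $\theta$ in the plane they span. If the monodromy is trivial, all three pullbacks are single-valued; they lie in $\calD^{\Hol}$ by the same local expansion used to prove Theorem~\ref{t:eigen}, and they are linearly independent because $f_0$ is nonconstant and $h_1,h_2,h_3$ are independent on $\bS^2$, so $\dim=3$. If the monodromy is nontrivial, single-valuedness of $\phi$ forces the rotating coefficient vector $(a_1,a_2)$ to be fixed by a nontrivial rotation, hence $a_1=a_2=0$, leaving only $\RR\cdot(h_1\circ f_0)$ and $\dim=1$. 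Since trivial monodromy is equivalent to $f_0$ being single-valued, i.e.\ to $g=f_0^*g_{\rm st}$ for a branched cover $f_0:\Sigma\to\overline{\CC}$, this yields the stated characterization.

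The main obstacle is the upper bound, where one must check that the ${\rm PSU}(2)$-ambiguity of the developing map confines \emph{all} real $2$-eigenfunctions to a single fixed three-dimensional span, and that the monodromy-invariance computation correctly isolates the one-dimensional invariant subspace in the reducible-but-nontrivial case; the membership of the pullbacks $h_2\circ f_0,h_3\circ f_0$ in $\calD^{\Hol}$ (in the trivial case, where all cone angles are integer multiples of $2\pi$) also requires a short local check. By contrast, the lower bounds—the existence of one, respectively three, independent eigenfunctions—are comparatively routine once Theorem~\ref{t:eigen} and Lemma~\ref{lemma:char} are in hand.
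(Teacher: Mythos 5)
Your proposal is correct and follows essentially the same route as the paper: both arguments rest on the representation $\phi = C\,\frac{1-|f|^2}{1+|f|^2}$ from Lemma~\ref{lemma:char}, the fact that all developing maps of $g$ differ by ${\rm PSU}(2)$, and the three lifted sphere eigenfunctions spanning the trivially reducible case. The only real difference is in the nontrivially reducible case, where the paper invokes that the only multiplicative developing maps are $f$ and $1/f$, while you reach the same conclusion by observing that a nontrivial ${\rm U}(1)$-monodromy rotates the pair $\bigl(h_2\circ f_0,\,h_3\circ f_0\bigr)$ and so kills their coefficients in any single-valued combination --- two phrasings of the same normalization fact.
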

\begin{proof} Choose a multiplicative developing map $f$ of the reducible metric $g$.

\noindent\textbf{Case 1: (nontrivial) reducible monodromy}

Suppose the monodromy of $f$ is non-trivial. Then
there are exactly two multiplicative developing maps, $f$ and $1/f$, for the metric $g$.
Since each real-valued 2-eigenfunction $\phi$ of $\Delta_g^{\rm Hol}$ can be expressed in terms of such a developing map as~\eqref{equ:eigenf},
we can see that the dimension of  ${\Bbb E}_{\Bbb R}^{\rm Hol}$ equals one.

\noindent\textbf{Case 2: trivial monodromy}

Suppose that $g$ is the pullback metric $f^*g_{\rm st}$ by a branched cover $f:\Sigma\to \overline{\CC}$.
First we know that $\dim\, {\Bbb E}_{\Bbb R}^{\rm Hol}\geq 3$, as the three eigenfunctions on $\bS^{2}$ lift to $\Sigma$ via the pullback, and they give three independent eigenfunctions.
On the other hand, for a real-valued 2-eigenfunction $\phi$ with maximum $1$, by \eqref{equ:eigenf}, there exists constants $a,\,b$ such that
\[|a|^2+|b|^2=1\quad\quad {\rm and}\quad\quad \phi=\frac{1-|g|^2}{1+|g|^2}\quad {\rm for}\quad g=\frac{af+b}{-\bar b f+\bar a}.\]
By a simple computation, we find that $\phi$ is a linear combination of the following three 2-eigenfunctions:
\[\frac{1-|f|^2}{1+|f|^2}, \quad \Re\,\frac{2f}{1+|f|^2}\quad{\rm and}\quad \Im\,\frac{2f}{1+|f|^2}.\]
This proves the dimension of ${\Bbb E}_{\Bbb R}^{\rm Hol}$ in this case is~3.
\end{proof}
\begin{remark}
In some cases one can obtain more information including all complex-valued 2-eigenfunctions. When $\Sigma$ is a branched cover of the sphere obtained by covering map $f(z)=z^{n}, n\in \NN$, explicit computation shows that the complex dimension of complex-valued 2-eigenfunctions in $\calD^{\Hol}$ and $\calD^{\Fr}$ are both equal to 3 (see~\cite[Lemma 2]{MaZh19} for the computation). In this case the eigenfunctions in the domain of the two extensions coincide and are given by the pullback by $f$ of the three eigenfunctions on the sphere.
\end{remark}

\section{Further discussion}\label{s:discussion}
The spectral condition in Theorem A uses a real eigenfunction in the holomorphic extension, which then is automatically a function in the Friedrichs extension, i.e. the coefficients $a_{k}$ in~\eqref{e:extensionzH} all vanish. However, one cannot replace the statement of the theorem by using a real eigenfunction in the Friedrichs extension. In fact, works by Mondello--Panov~\cite{MP2}, Eremenko--Gabrielov--Tarasov~\cite{EGT}, and Chen~\cite{ChenZ} suggest that there exist irreducible metrics with eigenvalue~2 in the spectrum of the Friedrichs Laplacian. It is ongoing work to find an explicit example of such an eigenfunction and understand the behavior of its associated complex gradient vector field, which is no longer guaranteed to be meromorphic.

One thing to notice here is that, even though the Laplace--Beltrami operator is real,  Friedrichs extension is the only extension that respects the real splitting. That is, we have the following relation for a function $\phi = u+iv$,
\[
\Delta_{g}\phi=2\phi \iff \Delta_{g}u=2u,\ \Delta_{g}v=2v,
\]
and
\[
\phi\in \calD^{\Fr} \iff u\in \calD^{\Fr},\ v\in \calD^{\Fr}.
\]
However for the holomorphic extension we only have
\[
\phi\in \calD^{\Hol}\ \Longleftarrow\ u\in \calD^{\Hol},\ v\in \calD^{\Hol}.
\]
The observation above justifies the choice of a real eigenfunction in the statement of the theorem. It is unknown whether for a reducible metric there exists any nontrivial complex-valued eigenfunction in the holomorphic extension, such that its real or imaginary part is not in the same extension. Corresponding to Theorem~\ref{thm:hol_eigenspace}, one may ask the question about the complex dimension of all such functions.

Similarly, there is a question whether an irreducible metric can have a nontrivial complex-valued eigenfunction in the holomorphic extension, which is not excluded by our theorem. By Proposition~\ref{p:hol} any such eigenfunction would produce a meromorphic vector field, and it is an interesting question whether there is any geometric implication if such a metric exists.

\vspace{5mm}

\noindent\textbf{Acknowledgement:} The authors would like to thank Rafe Mazzeo and Song Sun for many insightful discussions, and their hospitality during B.X.'s visit to Stanford University and UC Berkeley in Spring 2019. Part of the work was completed when both authors visited SCMS, Fudan University in Summer 2019.
The authors would also like to thank Pierre Albin, Zhijie Chen, Alexandre Eremenko,
Luc Hillairet,  Misha Karpukhin, Chris Kottke, Chang-Shou Lin, Gerardo Mendoza, Gabriele Mondello, Dmitri Panov,  Chin-Lung Wang, Yingyi Wu, Hao Yin and the anonymous referees for helpful suggestions and comments.
X. Z. is partially supported by the NSF grant DMS-2041823. Part of the work was supported by the NSF grant DMS-1440140 while X.Z. was in residence at MSRI in Berkeley, California, during Fall 2019 semester. B.X. is partially supported by the National Natural Science Foundation of China (Grant Nos. 11571330,11971450 and 12071449) and the Fundamental Research Funds for the Central Universities. Part of the work was done while B.X. was visiting Institute of Mathematical Sciences at ShanghaiTech University in Spring 2019.

\footnotesize

\Addresses


\begin{thebibliography}{CWWX15}

\bibitem[Bal06]{Ballmann}
Werner Ballmann.
\newblock {\em Lectures on {K}\"{a}hler manifolds}.
\newblock ESI Lectures in Mathematics and Physics. European Mathematical
  Society (EMS), Z\"{u}rich, 2006.

\bibitem[BDMM11]{BDM}
Daniele Bartolucci, Francesca De~Marchis, and Andrea Malchiodi.
\newblock Supercritical conformal metrics on surfaces with conical
  singularities.
\newblock {\em Int. Math. Res. Not. IMRN}, (24):5625--5643, 2011.

\bibitem[BS85]{BS1}
Jochen Br\"{u}ning and Robert Seeley.
\newblock Regular singular asymptotics.
\newblock {\em Adv. in Math.}, 58(2):133--148, 1985.

\bibitem[BS88]{BS2}
Jochen Br\"{u}ning and Robert Seeley.
\newblock An index theorem for first order regular singular operators.
\newblock {\em Amer. J. Math.}, 110(4):659--714, 1988.

\bibitem[Car14]{Car}
Alessandro Carlotto.
\newblock On the solvability of singular {L}iouville equations on compact
  surfaces of arbitrary genus.
\newblock {\em Trans. Amer. Math. Soc.}, 366(3):1237--1256, 2014.

\bibitem[CCW05]{CCW}
Qing Chen, Xiuxiong Chen, and Yingyi Wu.
\newblock The structure of {HCMU} metric in a {$K$}-surface.
\newblock {\em Int. Math. Res. Not.}, (16):941--958, 2005.

\bibitem[Che79]{Cheeger}
Jeff Cheeger.
\newblock On the spectral geometry of spaces with cone-like singularities.
\newblock {\em Proc. Nat. Acad. Sci. U.S.A.}, 76(5):2103--2106, 1979.

\bibitem[Che00]{Che00}
Xiuxiong Chen.
\newblock Obstruction to the existence of metric whose curvature has umbilical
  {H}essian in a {$K$}-surface.
\newblock {\em Comm. Anal. Geom.}, 8(2):267--299, 2000.

\bibitem[Che19]{ChenZ}
Zhijie Chen.
\newblock private communication, 2019.

\bibitem[CKL17]{CKL}
Zhijie Chen, Ting-Jung Kuo, and Chang-Shou Lin.
\newblock Existence and non-existence of solutions of the mean field equations
  on flat tori.
\newblock {\em Proc. Amer. Math. Soc.}, 145(9):3989--3996, 2017.

\bibitem[CLSX]{CLSX}
Q.~Chen, B.~Li, J.~Song, and B.~Xu.
\newblock {J}enkins--{S}trebel differentials and reducible cone spherical
  metrics on compact riemann surfaces.
\newblock {\em In preparation}.

\bibitem[CLW15]{CLW}
Ching-Li Chai, Chang-Shou Lin, and Chin-Lung Wang.
\newblock Mean field equations, hyperelliptic curves and modular forms: {I}.
\newblock {\em Camb. J. Math.}, 3(1-2):127--274, 2015.

\bibitem[CW11]{CW}
Qing Chen and Yingyi Wu.
\newblock Character 1-form and the existence of an {HCMU} metric.
\newblock {\em Math. Ann.}, 351(2):327--345, 2011.

\bibitem[CWWX15]{CWWX2015}
Qing Chen, Wei Wang, Yingyi Wu, and Bin Xu.
\newblock Conformal metrics with constant curvature one and finitely many
  conical singularities on compact {R}iemann surfaces.
\newblock {\em Pacific J. Math.}, 273(1):75--100, 2015.

\bibitem[CWX15]{CWX}
Qing Chen, Yingyi Wu, and Bin Xu.
\newblock On one-dimensional and singular {C}alabi's extremal metrics whose
  {G}auss curvatures have nonzero umbilical {H}essians.
\newblock {\em Israel J. Math.}, 208(1):385--412, 2015.

\bibitem[Dey18]{Dey}
Subhadip Dey.
\newblock Spherical metrics with conical singularities on 2-spheres.
\newblock {\em Geom. Dedicata}, 196:53--61, 2018.

\bibitem[EG15]{EG}
Alexandre Eremenko and Andrei Gabrielov.
\newblock On metrics of curvature 1 with four conic singularities on tori and
  on the sphere.
\newblock {\em Illinois J. Math.}, 59(4):925--947, 2015.

\bibitem[EGT16]{EGT}
A.~Eremenko, A.~Gabrielov, and V.~Tarasov.
\newblock Spherical quadrilaterals with three non-integer angles.
\newblock {\em Zh. Mat. Fiz. Anal. Geom.}, 12(2):134--167, 2016.

\bibitem[Ere04]{Ere}
A.~Eremenko.
\newblock Metrics of positive curvature with conic singularities on the sphere.
\newblock {\em Proc. Amer. Math. Soc.}, 132(11):3349--3355, 2004.

\bibitem[Ere17]{Ere2017}
Alexandre Eremenko.
\newblock Co-axial monodromy.
\newblock {\em Ann. Sc. Norm. Super. Pisa Cl. Sci.} Vol. 20 No.5(2020) 619-634.

\bibitem[Ere19]{Ere19}
Alexandre Eremenko.
\newblock On metrics of constant positive curvature with four conic
  singularities on the sphere.
\newblock {\em arXiv preprint arXiv:1905.02537}, 2019.

\bibitem[FSX17]{FSX}
Yu~Feng, Yi~Qian Shi, and Bin Xu.
\newblock On the explicit expression of a conformal metric of constant
  curvature one near a conical singularity.
\newblock {\em J. Univ. Sci. Technol. China}, 47(6):455--458, 2017.

\bibitem[GKM06]{GKM2}
Juan~B. Gil, Thomas Krainer, and Gerardo~A. Mendoza.
\newblock Resolvents of elliptic cone operators.
\newblock {\em J. Funct. Anal.}, 241(1):1--55, 2006.

\bibitem[GKM07]{GKM}
Juan~B. Gil, Thomas Krainer, and Gerardo~A. Mendoza.
\newblock Geometry and spectra of closed extensions of elliptic cone operators.
\newblock {\em Canad. J. Math.}, 59(4):742--794, 2007.

\bibitem[GM03]{GM}
Juan~B. Gil and Gerardo~A. Mendoza.
\newblock Adjoints of elliptic cone operators.
\newblock {\em Amer. J. Math.}, 125(2):357--408, 2003.

\bibitem[Her70]{Her}
Joseph Hersch.
\newblock Quatre propri\'{e}t\'{e}s isop\'{e}rim\'{e}triques de membranes
  sph\'{e}riques homog\`enes.
\newblock {\em C. R. Acad. Sci. Paris S\'{e}r. A-B}, 270:A1645--A1648, 1970.

\bibitem[Hil10]{Hill}
Luc Hillairet.
\newblock Spectral theory of translation surfaces: a short introduction.
\newblock In {\em Actes du {S}\'{e}minaire de {T}h\'{e}orie {S}pectrale et
  {G}\'{e}ometrie. {V}olume 28. {A}nn\'{e}e 2009--2010}, volume~28 of {\em
  S\'{e}min. Th\'{e}or. Spectr. G\'{e}om.}, pages 51--62. Univ. Grenoble I,
  Saint-Martin-d'H\`eres, [2010].

\bibitem[HK17]{HK}
Luc Hillairet and Alexey Kokotov.
\newblock Isospectrality, comparison formulas for determinants of {L}aplacian
  and flat metrics with non-trivial holonomy.
\newblock {\em Proc. Amer. Math. Soc.}, 145(9):3915--3928, 2017.

\bibitem[Kap17]{Ka}
Michael Kapovich.
\newblock Branched covers between spheres and polygonal inequalities in
  simplicial trees, 2017.

\bibitem[KNPP17]{KNPP}
Mikhail Karpukhin, Nikolai Nadirashvili, Alexei~V Penskoi, and Iosif
  Polterovich.
\newblock An isoperimetric inequality for laplace eigenvalues on the sphere.
\newblock {\em arXiv preprint arXiv:1706.05713}, 2017.

\bibitem[Les97]{Lesch}
Matthias Lesch.
\newblock {\em Operators of {F}uchs type, conical singularities, and asymptotic
  methods}, volume 136 of {\em Teubner-Texte zur Mathematik [Teubner Texts in
  Mathematics]}.
\newblock B. G. Teubner Verlagsgesellschaft mbH, Stuttgart, 1997.

\bibitem[Lic58]{Li}
Andr\'{e} Lichnerowicz.
\newblock {\em G\'{e}om\'{e}trie des groupes de transformations}.
\newblock Travaux et Recherches Math\'{e}matiques, III. Dunod, Paris, 1958.

\bibitem[LT92]{LuTi92}
Feng Luo and Gang Tian.
\newblock Liouville equation and spherical convex polytopes.
\newblock {\em Proc. Amer. Math. Soc.}, 116(4):1119--1129, 1992.

\bibitem[LW10]{LW}
Chang-Shou Lin and Chin-Lung Wang.
\newblock Elliptic functions, {G}reen functions and the mean field equations on
  tori.
\newblock {\em Ann. of Math. (2)}, 172(2):911--954, 2010.

\bibitem[LZ02]{LiZhu02}
Chang-Shou Lin and Xiaohua Zhu.
\newblock Explicit construction of extremal {H}ermitian metrics with finite
  conical singularities on {$S^2$}.
\newblock {\em Comm. Anal. Geom.}, 10(1):177--216, 2002.

\bibitem[Ma57]{Ma57} Y. Matsushima.
\newblock Sur la structure du groupe d'hom\'eomorphismes analytiques d'une certaine vari\'et\'e
kaehlerienne.
\newblock {\em Nagoya Math. J.}, 11:145-150, 1957.

\bibitem[McO88]{McOw}
Robert~C. McOwen.
\newblock Point singularities and conformal metrics on {R}iemann surfaces.
\newblock {\em Proc. Amer. Math. Soc.}, 103(1):222--224, 1988.

\bibitem[Mel93]{Me}
Richard~B. Melrose.
\newblock {\em The {A}tiyah-{P}atodi-{S}inger index theorem}, volume~4 of {\em
  Research Notes in Mathematics}.
\newblock A K Peters, Ltd., Wellesley, MA, 1993.

\bibitem[Moo99]{Mooers}
Edith~A. Mooers.
\newblock Heat kernel asymptotics on manifolds with conic singularities.
\newblock {\em J. Anal. Math.}, 78:1--36, 1999.

\bibitem[MP16]{MP}
Gabriele Mondello and Dmitri Panov.
\newblock Spherical metrics with conical singularities on a 2-sphere: angle
  constraints.
\newblock {\em Int. Math. Res. Not. IMRN}, (16):4937--4995, 2016.

\bibitem[MP18]{MP2}
Gabriele Mondello and Dmitri Panov.
\newblock Spherical surfaces with conical points: systole inequality and moduli
  spaces with many connected components.
{\em Geometric and Functional Analysis}, 29(4), 1110-1193.

\bibitem[MW17]{MaWe}
Rafe Mazzeo and Hartmut Weiss.
\newblock Teichm\"{u}ller theory for conic surfaces.
\newblock In {\em Geometry, analysis and probability}, volume 310 of {\em
  Progr. Math.}, pages 127--164. Birkh\"{a}user/Springer, Cham, 2017.

\bibitem[MZ17]{MaZh17}
Rafe Mazzeo and Xuwen Zhu.
\newblock Conical metrics on {R}iemann surfaces, {I}: the compactified
  configuration space and regularity.
\newblock {\em Geometry and Topology}, 24 (2020) 309–372.

\bibitem[MZ19]{MaZh19}
Rafe Mazzeo and Xuwen Zhu.
\newblock Conical metrics on {R}iemann surfaces, {II}: spherical metrics.
\newblock {\em arXiv preprint arXiv:1906.09720}, 2019.

\bibitem[Oba62]{Ob}
Morio Obata.
\newblock Certain conditions for a {R}iemannian manifold to be isometric with a
  sphere.
\newblock {\em J. Math. Soc. Japan}, 14:333--340, 1962.

\bibitem[RS75]{RS2}
Michael Reed and Barry Simon.
\newblock {\em Methods of modern mathematical physics. {II}. {F}ourier
  analysis, self-adjointness}.
\newblock Academic Press [Harcourt Brace Jovanovich, Publishers], New
  York-London, 1975.

\bibitem[RS80]{RS1}
Michael Reed and Barry Simon.
\newblock {\em Methods of modern mathematical physics. {I}}.
\newblock Academic Press, Inc. [Harcourt Brace Jovanovich, Publishers], New
  York, second edition, 1980.
\newblock Functional analysis.

\bibitem[SCLX18]{SCLX}
Jijian Song, Yiran Cheng, Bo~Li, and Bin Xu.
\newblock {Drawing Cone Spherical Metrics via Strebel Differentials}.
\newblock {\em International Mathematics Research Notices}, Volume 2020, Issue 11, June 2020, Pages 3341–3363.

\bibitem[SX20]{SongXu2020}
Jijian Song and Bin Xu.
\newblock On rational functions with more than three branch points.
\newblock {\em Algebra Colloquium}, 27(2):231-246, 2020.


\bibitem[Tro91]{Tro}
Marc Troyanov.
\newblock Prescribing curvature on compact surfaces with conical singularities.
\newblock {\em Trans. Amer. Math. Soc.}, 324(2):793--821, 1991.

\bibitem[UY00]{UY}
Masaaki Umehara and Kotaro Yamada.
\newblock Metrics of constant curvature {$1$} with three conical singularities
  on the {$2$}-sphere.
\newblock {\em Illinois J. Math.}, 44(1):72--94, 2000.

\bibitem[Zhu19a]{Zh19}
Xuwen Zhu.
\newblock Rigidity of a family of spherical conical metrics.
\newblock {\em New York J. Math.}, 26 (2020) 272--284.

\bibitem[Zhu19b]{Zh18}
Xuwen Zhu.
\newblock Spherical conic metrics and realizability of branched covers.
\newblock {\em Proc. Amer. Math. Soc.}, 147(4):1805--1815, 2019.

\end{thebibliography}
\end{document}